\theoremstyle{plain}
\newcommand{\bC}{{\mathbb C}}
\newcommand{\bF}{{\mathbb F}}
\newcommand{\bH}{{\mathbb H}}
\newcommand{\bQ}{{\mathbb Q}}
\newcommand{\bR}{{\mathbb R}}
\newcommand{\bZ}{{\mathbb Z}}
\newcommand{\cL}{{\mathscr L}}
\newcommand{\caO}{{\mathcal O}}
\newcommand{\fP}{{\mathfrak P}}
\newcommand{\fQ}{{\mathfrak Q}}
\newcommand{\fc}{{\mathfrak c}}
\newcommand{\fm}{{\mathfrak m}}
\newcommand{\fp}{{\mathfrak p}}
\DeclareSymbolFont{cyrletters}{OT2}{wncyr}{m}{n}
\DeclareMathSymbol{\Sha}{\mathalpha}{cyrletters}{"58}
\DeclareMathOperator{\pr}{pr}
\DeclareMathOperator{\Hom}{Hom}
\DeclareMathOperator{\coker}{coker}
\DeclareMathOperator{\Spec}{Spec}
\DeclareMathOperator{\Frob}{Frob}
\DeclareMathOperator{\res}{res}
\DeclareMathOperator{\cores}{cor}
\DeclareMathOperator{\Gal}{G}
\newcommand{\nr}{{\rm nr}}
\newtheorem*{rep@theorem}{\rep@title}
\newcommand{\newreptheorem}[2]{%
\newenvironment{rep#1}[1]{%
 \def\rep@title{#2 \ref{##1}}%
 \begin{rep@theorem}}%
 {\end{rep@theorem}}}
\newtheorem{thm}{Theorem}[section]
\newtheorem{prop}[thm]{Proposition}
\newtheorem{cor}[thm]{Corollary}
\newtheorem{lm}[thm]{Lemma}
\newtheorem{claim}[thm]{Claim}
\theoremstyle{definition}
\newtheorem{Def}[thm]{Definition}
\newtheorem{rem}[thm]{Remark}
\newtheorem{rems}[thm]{Remarks}
\newenvironment{pro*}[1][Proof]{{\it{#1:}} }{}
\newcommand{\abs}{\sharp}
\newcommand\rar{ \rightarrow }
\newcommand\tar{ \twoheadrightarrow }
\newcommand\har{ \hookrightarrow }
\newcommand\ord{\mathop{\rm ord}}
\newcommand\cs{\mathop{ \rm cs}}
\newcommand\dirlim{\mathop{\underrightarrow{\lim} }}
\newcommand{\sm}{{\,\smallsetminus\,}}
\newcommand\N{\mathop{\rm N}}
\DeclareMathOperator{\coh}{H}
\newcommand\subsetsim{\stackrel{\subset}{\sim}}
\newcommand\supsetsim{\stackrel{\supset}{\sim}}
\DeclareMathOperator{\Ram}{Ram}
\DeclareMathOperator{\lcm}{lcm}
\newcounter{absatzcounter}[section]
\numberwithin{equation}{section}
\begin{document}

\title{Densities of primes and realization of local extensions}
\author{A. Ivanov}
\maketitle

\begin{abstract}
In this paper we introduce new densities on the set of primes of a number field. If $K/K_0$ is a Galois extension of number fields, we associate to any element $x \in \Gal_{K/K_0}$ a density $\delta_{K/K_0,x}$ on primes of $K$. In particular, the density associated to $x = 1$ is the usual Dirichlet density on $K$. Using these densities (for $x \neq 1$) we prove realization results \`a la Grunwald-Wang theorem such that essentially, ramification is only allowed in a set of primes of density zero. 

\textbf{2010 Mathematical Subject Classification:} 11R44, 11R45, 11R34.
\end{abstract}

\section{Introduction}

In this paper we adress the question of generalizing the Dirichlet density on the set of primes of a number field. In particular, we provide sets of primes with Dirichlet density zero with an appropriate positive measure. The second goal of the paper is to use these generalized densities to show a realization result of local extensions by global ones satisfying certain conditions.

\subsection*{Generalized densities}

Let $K/K_0$ be a finite Galois extension of number fields, i.e., of finite extensions of $\bQ$. Let $x \in \Gal_{K/K_0}$ be of order $d$. Let $P_{K/K_0}^x$ denote the set of all primes $\fp$ of $K$ which are unramified in $K/K_0$ and satisfy $\Frob_{\fp,K/K_0} = x$. We will introduce a density $\delta_{K/K_0,x}$ of a set $S$ of primes of $K$, which measures how big the ratio of the sizes of $S \cap P_{K/K_0}^x$ and $P_{K/K_0}^x$ is. This is done in the same way as for Dirichlet density, with the only difference that one has to take the limit over the ratio of terms of the kind $\sum_{\fp \in \ast} \N\fp^{-s}$ not over $s \rar 1$ but over $s \rar d^{-1}$ with $s$ lying in the right half plane $\Re(s) > d^{-1}$. Further, $\delta_{K/K_0,x}$ is essentially independent of the base field $K_0$, so one also could replace $K_0$ once for all time by $\bQ$, but it is easier to work with a \emph{Galois} extension $K/K_0$.

Once introduced, the most interesting thing about such a density is its base change behavior. To explain it, let $L/K$ be an extension such that $L/K_0$ is Galois. Write $H := \Gal_{L/K} \triangleleft \Gal_{L/K_0} =: G$ and $\pi \colon G \tar G/H$ for the natural projection. For any $y \in \pi^{-1}(x)$ we have the map induced by restriction of primes $P_{L/K_0}^y \rar P_{K/K_0}^x$. It is in general neither injective nor surjective. For $y,z \in \pi^{-1}(x)$ one easily sees that the images of the corresponding maps are either equal or disjoint and that the first is equivalent to $y,z$ being $H$-conjugate (cf. Lemma \ref{lm:H-conj_rel}). If $C$ is an $H$-conjugacy class in $\pi^{-1}(x)$, let $M_C$ denote the image of $P^y_{L/K_0}$ for some (any) $y \in C$ in $P_{K/K_0}^x$. We will show the following generalization of Chebotarev's density theorem and then obtain a description of the base change behavior of $\delta_{K/K_0,x}$ as a direct corollary:

\begin{repprop}{prop:verallgCheb}
Let $L/K/K_0,\pi,x$ be as above. Let $C$ be an $H$-conjugacy class in $\pi^{-1}(x)$. Then 
\[ \delta_{K/K_0,x}(M_C) = \frac{\abs{C}}{\abs{H}}. \]
\end{repprop}

\begin{repcor}{cor:bc_of_delta_x}
Let $y \in \pi^{-1}(x)$ and let $C$ be its $H$-conjugacy class in $\pi^{-1}(x)$. Then 
\[ \delta_{L/K_0,y}(S_L) = \frac{\abs{H}}{\abs{C}} \delta_{K/K_0,x}(S \cap M_C) \]
\noindent if both densities exist.
\end{repcor}

More general, for any function $\psi \colon \Gal_{K/K_0} \rar \bC$  one can define a weighted function by $\delta_{K/K_0, \psi}(S) := [K:K_0]^{-1}\sum_{x \in \Gal_{K/K_0}} \psi(x) \delta_{K/K_0,x}(S)$. Then for example the Dirichlet density is associated with the character of the regular representation of $G$.

Similarly as Serre extended the Dirichlet density to a density on the set of closed points of a scheme of finite type over $\Spec \bZ$, also the densities associated to fixed Frobenius elements should generalize in this way. Furthermore, it would be intereting to know, whether in the case of varieties of dimension $\geq 2$ over a perfect field, it is possible to define such fixed Frobenius densities for divisors (i.e, to non-closed points) as was done with the Dirichlet density by Holschbach \cite{Ho}.

Below we discuss a realization result, which proof uses the generalized densities. A further application of them concerns saturated sets introduced by Wingberg in \cite{Wi2} and will be discussed elsewhere.

Finally, we have an obsevation concerning $L$-functions: there is the following problem about extending $L$-functions in the same way as the densities above. Let $K/\bQ$ be a finite Galois extension and $x \in \Gal_{K/\bQ}$. Consider the following product associated to $x$ and a Dirichlet character $\chi$ modulo $\fm$:

\begin{equation} \label{eq:L_support_ala_Hecke}
L_x(\fm, s, \chi) := \prod_{\fp \in P_{K/\bQ}^x} \frac{1}{1 - \chi(\fp)\N\fp^{-s}}.
\end{equation}

\noindent This product converges on the right half plane $\Re(s) > d^{-1}$, where $d$ is the order of $x$. But in general this function has no analytic continuation to the whole complex plain (not even to the right half plane $\Re(s) > 0$). The reason is easy: let $\fm = 1, \chi = 1$. For $s \rar d^{-1}$ this product behaves like $d^{-\frac{1}{d}} (\frac{1}{s - d^{-1}})^{\frac{1}{d}}$, i.e., their difference is bounded for $s \rar d^{-1}$, and this last function clearly has no analytic continuation. A natural question is, whether this problem can be resolved, for example by taking the $d$-th power of the product above or by removing a half-line starting at $d^{-1}$ from the complex plain. Luckily, one does not need any non-vanishing results on such L-functions to show Proposition \ref{prop:verallgCheb}, as it follows by simple counting arguments from Chebotarev's density theorem.




\subsection*{Realization of local extensions}

Let us first fix some notations. Let $\fc$ be a full class of finite groups (in the sense of \cite{NSW} 3.5.2). Let $R \subseteq S$ be two sets of primes of a number field $K$. Then $K_S^R(\fc)$ denotes the maximal pro-$\fc$-extension of $K$ which is unramified outside $S$ and completely split in $R$. Moreover, for a prime $\fp$ of $K$ we denote by $K_{\fp}(\fc)$ the maximal pro-$\fc$-extension of $K_{\fp}$ and by $K_{\fp}^{\nr}$ the maximal unramified extension of $K_{\fp}$.

For $\ell$ a rational prime or $\infty$, let $\fc_{\leq \ell}$ denote the smallest full class of all finite groups, containing the groups $\bZ/p\bZ$ for all $p \leq \ell$. Our main result will be the following generalization of \cite{NSW} 9.4.3, which handles the case of $\delta_K(S) = 1$.


\begin{thm}\label{thm:real_of_loc_ext}
Let $K$ be a number field, $S \supseteq R$ sets of primes of $K$, such that $R$ is finite and $S \supsetsim P_{M/K}(\sigma)$ for some finite extension $M/K$ and $\sigma \in \Gal_{M/K}$. For any $\ell \leq \infty$ and any prime $\fp$ of $K$ we have:

\begin{equation*}
(K_S^R(\fc_{\leq \ell}))_{\fp} = \begin{cases} K_{\fp}(\fc_{\leq \ell}) & \text{if } \fp \in S \sm R \\ K_{\fp}(\fc_{\leq \ell}) \cap K_{\fp}^{\nr} & \text{if } \fp \not\in S \\ K_{\fp} & \text{if } \fp \in R. \end{cases}
\end{equation*}
\noindent In particular, since absolute Galois groups of local fields are solvable, taking $\ell = \infty$ shows that the maximal solvable subextension of $K_S^R/K$ lies dense in $\overline{K_{\fp}}$ resp. in $K_{\fp}^{\nr}$ for $\fp \in S \sm R$ resp. $\fp \not\in S$.
\end{thm}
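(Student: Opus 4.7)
\begin{pro*}[Sketch]
The three containments $(K_S^R(\fc_{\leq \ell}))_\fp \subseteq (\cdot)$ are immediate from the definition of $K_S^R(\fc_{\leq \ell})$, which is pro-$\fc_{\leq \ell}$, completely split above $R$, and unramified outside $S$ by construction, so the same holds for every completion. The substance is the reverse inclusions, and the plan for them is as follows.

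First I would reduce to a base case. Since absolute Galois groups of local fields are pro-solvable, every finite subextension of $K_\fp(\fc_{\leq \ell})$ (resp.\ $K_\fp(\fc_{\leq \ell}) \cap K_\fp^{\nr}$) is a tower of cyclic prime-degree extensions of degrees $\leq \ell$, so an induction on tower length reduces everything to realizing a single cyclic degree-$p$ local extension $F_\fq/L_\fq$ (with $L \subseteq K_S^R(\fc_{\leq \ell})$ a finite intermediate field, $\fq$ a prime of $L$ above $\fp \notin R$, $p \leq \ell$, and $F_\fq/L_\fq$ unramified if $\fp \notin S$) by a global cyclic extension $F/L$ which is unramified outside primes of $L$ above $S$ and completely split above those of $R$. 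Replacing $L$ by $L' = L(\mu_p)$ and descending by a restriction--corestriction argument (valid since $[L':L] \mid p-1$ is coprime to $p$), the question becomes the Kummer-theoretic problem of finding a class $\overline{a} \in L'^{\ast}/(L'^{\ast})^p$ whose image at $\fq$ matches $F_\fq/L_\fq$, that is a $p$-th power at every prime above $R$, and whose divisor is supported above $S$. The obstructions assemble into a Shafarevich kernel
\[
\Sha^1_{S'}(L',\mu_p) = \ker\Bigl(\coh^1(L',\mu_p) \rar \prod_{\fr \in S'}\coh^1(L'_\fr, \mu_p)\Bigr),
\]
plus, for $p = 2$, a Grunwald--Wang special-case contribution handled by enlarging the list of local conditions by a controlled auxiliary set of primes, as in the classical argument. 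The proof of NSW 9.4.3 kills this kernel via Chebotarev using $\delta_K(S) = 1$.

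The main obstacle is that here $\delta_K(S)$ may be strictly smaller than $1$, so the classical Chebotarev step fails. My plan is to replace it by the generalized density argument made available by the hypothesis: $\delta_{M/K,\sigma}(S) = 1$. Given a would-be nonzero class in the Shafarevich kernel, I would choose a finite Galois extension $N/K$ containing both $M$ and $L'$ in which the class trivializes, and apply Proposition \ref{prop:verallgCheb} to the $\Gal_{N/L'}$-conjugacy classes inside $\pi^{-1}(\sigma) \subseteq \Gal_{N/K}$, using Corollary \ref{cor:bc_of_delta_x} to transport the density computation from $K$ to $L'$. The net conclusion is that a nonzero class would have to vanish on the Frobenius elements attached to all primes of $P_{M/K}(\sigma)$, hence on every $H$-conjugacy class in $\pi^{-1}(\sigma)$, contradicting its non-triviality. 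Thus $\Sha^1_{S'}(L',\mu_p) = 0$, and the required Kummer class can be produced.

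Finally, Krasner's lemma upgrades this Kummer class to the desired global cyclic extension $F/L$ with prescribed $\fq$-completion, correct ramification, and correct splitting behavior, and iterating this base step through the pro-solvable filtration of the target local extension yields the theorem.
\end{pro*}
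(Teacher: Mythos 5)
Your opening reductions (the easy containments, the d\'evissage to a single cyclic degree-$p$ step, Poitou--Tate/Kummer packaging of the obstruction into a Shafarevich kernel) match the paper's setup, but the proposal breaks at the decisive claim that $\Sha^1_{S'}(L',\mu_p)=0$. In the only hard case --- which is exactly when $\bar\sigma\neq 1$ in $\Gal_{K(\mu_p)/K}$, so that $\delta_{L'}(S_{L'})=0$ --- the Frobenius elements attached to primes of $S_{L'}$ in a trivializing extension $N/L'$ sweep out only the $H$-conjugacy classes inside the single coset $\pi^{-1}(\sigma)$, not all of $\Gal_{N/L'}$. A nonzero Kummer class can therefore split at every prime of $S$ without being trivial: ``vanishing on $\pi^{-1}(\sigma)$'' does not contradict non-triviality, and the Shafarevich kernel genuinely need not vanish at any finite level (the paper's Remark 5.3 says this explicitly). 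What the paper does instead is construct a specific tower $K\subset K_0\subset K_1\subset\cdots$ with $\Gal_{K_i/K}\cong H_0\ltimes(\bZ/p\bZ)^i$, decompose $\Sha^1_i$ into $H_0$-isotypic pieces, and use the generalized densities only to prove a \emph{uniform bound} $\abs{\Sha^1_i(\psi)}<C$ independent of $i$ (Proposition \ref{prop:unif_bound}, via $\delta_{K_i(\mu_p),(\bar\sigma,x)}(S_0)>C^{-1}$ and the complete splitting of $S_0\sm T$ in $L_i(\psi)$); the direct limit is then killed not by vanishing but by the corestriction trick $\cores\circ\res=[K_j:K_i]\equiv 0\bmod p$ once the restrictions have stabilized.

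A second, independent gap: your sketch never uses the cross-prime structure of $\fc_{\leq\ell}$, but without it the statement is false. If $\fq\in S$, $\fq\nmid p$ and $\mu_p\not\subseteq L_\fq$, then $L_\fq$ admits no ramified $\bZ/p$-extension at all, so no choice of global Kummer class can realize $L_\fq(\fc_{\leq\ell})$ by a $p$-extension alone --- this is the paper's counterexample $\bQ_S(p)=\bQ$ for $S=P_{\bQ(\mu_3)/\bQ}(\sigma)$. The proof must first invoke the induction hypothesis for primes $p'<p$ (Step 5) to manufacture a cyclic degree-$d$ extension $K_0/K$ inside $K_S^R(\fc_{\leq\ell})$, $d=[K(\mu_p):K]$, which trivializes $\mu_p$ locally at a set of primes of $S$ of positive $(\sigma,x)$-density; only on top of that tower does the Kummer-theoretic argument have any primes to work with. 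Your restriction--corestriction descent from $L(\mu_p)$ to $L$ does not substitute for this, since the issue is local at the primes of $S$, not the global degree $[L(\mu_p):L]$.
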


One part of the proof of Theorem \ref{thm:real_of_loc_ext}, namely to realize a $p$-extension with given local properties, when $S$ satisfies the property $(\dagger)_p$ introduced in \cite{IvStableSets} (see also Section \ref{sec:complem_stable_sets} below) was already done in \cite{IvStableSets}. Essentially, $(\dagger)_p$ means that $S$ contains many primes $\fp$, which are completely split in $K(\mu_p)/K$. The remaining and much more delicate case is when $\delta_{K(\mu_p)}(S_{K(\mu_p)}) = 0$ holds. Then the usual methods from \cite{NSW} and \cite{IvStableSets} do not apply anymore. Moreover, in such a case the pro-$p$-version of the theorem easily can fail.  For example, suppose that $\mu_p \not\subseteq K$ and $K(\mu_p)/K$ is totally ramified at each $p$-adic prime, let $1 \neq \sigma \in \Gal_{K(\mu_p)/K}$ and set $S := P_{K(\mu_p)/K}(\sigma)$. Then any prime $\fp \in S$ is unramified in $K_S(p)/K$, as $\fp \not\in S_p$ and $\mu_p \not\subseteq K_{\fp}$. Hence $K_S(p) = K_{\emptyset}(p)$. In particular,
 let $K = \bQ$ and $p$ odd. Then $\bQ_S(p) = \bQ_{\emptyset}(p) = \bQ$, i.e., the maximal possible local $p$-extension is realized nowhere. 

However, in the pro-$\fc_{\leq \ell}$-case the theorem holds. For example take in the above example $\ell = 3$. The set $S := P_{\bQ(\mu_3)/\bQ}(\sigma)$ satisfies $(\dagger)_p$ for all $p \neq 3$, and in particular $(\dagger)_2$. Hence at any $\fp \in S$ the maximal pro-$2$-extension can be realized, and hence $\mu_3 \subseteq \bQ_{S,\fp}$. After going up to an appropriate finite subextension $\bQ_S(\fc_{\leq 2})/K/\bQ$, the set $P_{\bQ(\mu_3)/\bQ}(\sigma)_K \cap \cs(K(\mu_3)/K)$ would at least be infinite and not more empty as for $K = \bQ$. The main obstruction now is that this set has Dirichlet density $0$, and no one of the usual arguments involving Dirichlet density will apply. To overcome this difficulty we will use the fixed Frobenius densities introduced above. Namely, it turns out that certain $x$-density of this set is positive and then one again can apply some density arguments. However, these arguments are in our situation much more subtle than in the situations where one can use Dirichlet density.

Finally, we remark that there are several other appraoches to realization results of similar spirit. As to the knowledge of the author, no one of them covers the abovementioned case, where one tries to realize $p$-extensions with ramification allowed only outside $\cs(K(\mu_p/K))$. We mention two recent approaches: a certain pro-$p$ version of the theorem above is also known (only for primes in $S$) in the much harder situation of a finite set $S$ by the work of A. Schmidt (cf. e.g. \cite{Sch}) but only after enlarging $S$ by an appropriate finite subset of a fixed set $T$ of primes of density $1$ (which, in particular, satisfies $(\dagger)_p$). A further, completely different and very powerful approach using automorphic forms, which deals with the whole pro-finite group and a finite set $S$, was introduced by Chenevier and Clozel \cite{Ch}, \cite{CC}. However, compared to results of this paper, the drawback is that one has to forget about solvability conditions and to assume $R = \emptyset$ (no control of 
the unramified extensions) and that at least one rational prime must lie in $\caO_{K,S}^{\ast}$.

\subsection*{Notation}

For any $a \in \bR$ we denote by $\bH_a$ the complex right half plane $\{s + it \colon \Re(s) > a \}$.
Let $G$ be a group and $\sigma \in G$ be any element. Then we denote by $C(\sigma, G)$ the conjugacy class of $\sigma$, by $\ord(\sigma)$ the order of $\sigma$ and by $Z_G(\sigma)$ the centralizer of $\sigma$. 

Let $L/K$ be an extension of number fields. We write $\Sigma_K$ for the set of all primes of $K$, $S_{\fp}(L)$ for the set of primes in $L$ lying over a prime $\fp$ of $K$. If $S \subseteq \Sigma_K$, then we write $S_L$, $S(L)$ or sometimes simply $S$ for the pull-back of $S$ to $L$. If $L/K$ is Galois and $x \in \Gal_{L/K}$, the Chebotarev set $P_{L/K}(x)$ is the set of primes in $K$ which are unramified in $L/K$ and whose Frobenius class is $C(x,\Gal_{L/K})$ and $P_{L/K}^x$ denotes the set of primes in $L$ which are unramified in $L/K$ and whose Frobenius is $x$. Moreover, we call a set which differs from a Chebotarev set only by a subset of Dirichlet density $0$ an almost Chebotarev set. For $\fp \in \Sigma_K$, $\N\fp$ denotes the norm of $\fp$ over $\bQ$, i.e., the cardinality of the residue field. If $S,T \subseteq \Sigma_K$, then $S \subsetsim T$ means that $S$ lies in $T$ up to a (Dirichlet) density zero subset and $S \backsimeq T$ means $S \subsetsim T$ and $T \subsetsim S$.



\subsection*{Outline of the paper}
In Section \ref{sec:Def_of_gen_densities} we define the generalized densities. In Section \ref{sec:pull-back-properties} we establish some base-change formulas and an easy generalization of Chebotarev's density theorem for these densities. In Section \ref{sec:gen_of_gen_dens_to_char_dens} we generalize slightly the notion of these densities introduced in Section \ref{sec:Def_of_gen_densities}. In Section \ref{sec:proof_of_thm} we prove Theorem \ref{thm:real_of_loc_ext}.

\subsection*{Acknowledgements} 

I want to thank Jakob Stix, Kay Wingberg, Jochen G\"artner and Daniel Harrer for fruitful discussions on this subject, from which I learned a lot. Also I want to thank the hospitality of the HIM center at the University of Bonn, where parts of this work were done and the Technical Univeristy Munich, where the rest was done.





\section{Densities associated to Frobenius elements} \label{sec:Def_of_gen_densities}

Let $K_0$ be a fixed finite extension of $\bQ$. Let $K/K_0$ be a finite Galois extension and $x \in G := \Gal_{K/K_0}$ an element of order $d$. Our starting point is the following easy but fundamental observation.

\begin{lm} \label{lm:pipipip} 
The series $\sum_{\fp \in P_{K/K_0}^x} \N\fp^{-s}$ converges for all $s$ with $\Re(s) > d^{-1}$. It defines a holomorphic function on $\bH_{d^{-1}}$ and 

\[ \lim_{s \rar d^{-1} + 0} \sum_{\fp \in P_{K/K_0}^x} \N\fp^{-s} = \infty. \]
\end{lm}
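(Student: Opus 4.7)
The plan is to reduce this lemma to the well-known behaviour of the partial zeta function associated with a Chebotarev set of primes of the base field $K_0$, via the residue-degree substitution $\N\fp = \N p^d$.

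First I would pass from primes of $K$ to primes of $K_0$. For $\fp \in P_{K/K_0}^x$, write $p := \fp \cap K_0$. Since $\fp$ is unramified over $p$ and $\Frob_{\fp,K/K_0} = x$ has order $d$, the decomposition group at $\fp$ is the cyclic group $\langle x \rangle$ of order $d$, so the residue degree equals $d$ and hence $\N\fp = \N p^{d}$. In particular $p$ lies in the Chebotarev set $P_{K/K_0}(x) \subseteq \Sigma_{K_0}$. Conversely, for each such $p$ I would count the primes $\fp \mid p$ with $\Frob_{\fp} = x$ exactly (not merely conjugate to $x$): fixing one such $\fp_0$, the Galois orbit gives $\Frob_{\sigma \fp_0} = \sigma x \sigma^{-1}$, so $\sigma$ contributes iff $\sigma \in Z_G(x)$, and two such $\sigma$ give the same prime iff they differ by an element of the decomposition group $\langle x \rangle \subseteq Z_G(x)$. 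Thus each $p \in P_{K/K_0}(x)$ has exactly $|Z_G(x)|/d$ primes above it in $P_{K/K_0}^x$.

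Combining these two observations yields the clean identity
\[
\sum_{\fp \in P_{K/K_0}^x} \N\fp^{-s} \;=\; \frac{|Z_G(x)|}{d} \sum_{p \in P_{K/K_0}(x)} \N p^{-ds},
\]
valid as a formal equality of Dirichlet series. The problem is thereby shifted to the substituted variable $u := ds$: the right-hand sum is a partial Dirichlet zeta function of $K_0$ supported on the Chebotarev set $P_{K/K_0}(x)$, in the variable $u$.

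For the analytic statements I would then invoke standard facts. Absolute convergence and holomorphy on $\bH_{d^{-1}}$ follow by comparison with $\zeta_{K_0}(ds) = \sum_{p \in \Sigma_{K_0}} \N p^{-ds}$, which converges uniformly on compacta of $\Re(ds) > 1$, i.e.\ of $\bH_{d^{-1}}$. For the divergence at the boundary, Chebotarev's density theorem gives that $P_{K/K_0}(x)$ has Dirichlet density $|C(x,G)|/|G| = 1/|Z_G(x)|$, so
\[
\sum_{p \in P_{K/K_0}(x)} \N p^{-u} \;=\; \frac{1}{|Z_G(x)|}\log\frac{1}{u-1} + O(1) \quad \text{as } u \to 1^{+}.
\]
Substituting $u = ds$ with $s \to d^{-1} + 0$ turns the right-hand side of the displayed identity into $\frac{1}{d}\log\frac{1}{ds-1} + O(1) \to +\infty$, which proves the final claim. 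There is no real obstacle here; the only point requiring care is the combinatorial fibre count, and matching the scaling $u = ds$ correctly so that the pole at $u=1$ translates into the claimed divergence at $s = d^{-1}$.
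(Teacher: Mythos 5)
Your proposal is correct and follows essentially the same route as the paper: the substitution $\N\fp = \N\fp_0^{d}$, the fibre count showing the map $P_{K/K_0}^x \to P_{K/K_0}(x)$ is $|Z_G(x)|/d$-to-one, comparison with $\zeta_{K_0}(ds)$ for convergence and holomorphy, and Chebotarev for the divergence at $s = d^{-1}$. The only cosmetic difference is that you invoke the logarithmic asymptotic for the Chebotarev partial zeta function, whereas the paper gets by with positivity of the Dirichlet density together with the divergence of $\sum_{\fp_0 \in \Sigma_{K_0}} \N\fp_0^{-s}$ as $s \to 1^{+}$; both are standard and adequate.
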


\begin{proof} For $\fp \in P_{K/K_0}^x$ with $\fp_0 := \fp|_{K_0}$ we have $\N\fp = \N\fp_0^d$. The map $P_{K/K_0}^x \rar P_{K/K_0}(x)$ is surjective and $\abs{\frac{Z_G(x)}{\langle x \rangle}}$-to-1 (this is immediate; cf. also Lemma \ref{lm:Faser_gamma}). Hence for all $s \in \bH_{d^{-1}}$ and all $a > 0$ we get:

\begin{equation}\label{eq:sumsumsum} 
\left(\abs{\frac{Z_G(x)}{\langle x \rangle}}\right)^{-1} \sum_{\substack{\fp \in P_{K/K_0}^x \\ \N\fp < a^d}} |\N\fp^{-s}| = \sum_{\substack{\fp_0 \in P_{K/K_0}(x) \\ \N\fp_0 < a}} |\N\fp_0^{-ds}| \leq \sum_{\substack{\fp_0 \in \Sigma_{K_0} \\ \N\fp_0 < a}} |\N\fp_0^{-ds}|. 
\end{equation}
 
\noindent The last term converges for $a \rar \infty$ and any fixed $s \in \bH_{d^{-1}}$. One sees easily that the convergence is uniform on the half plane $\bH_{d^{-1} + \epsilon}$ for any $\epsilon > 0$, hence the series in the lemma defines a holomorphic function on $\bH_{d^{-1}}$. Finally, $\sum_{\fp \in \Sigma_{K_0}} \N\fp^{-s}$ goes to infinity if $s \rar 1$ and $0 < \delta_{K_0}(P_{K/K_0}(x)) = \lim\limits_{s \rar 1}  \frac{\sum_{\fp \in P_{K/K_0}(x)} \N\fp^{-s}}{\sum_{\fp \in \Sigma_{K_0}} \N\fp^{-s}}$, hence also $\sum_{\fp \in P_{K/K_0}(x)} \N\fp^{-s} \rar \infty$ for $s \rar 1$, and the last statement of the lemma follows from \eqref{eq:sumsumsum}.
\end{proof}

\begin{Def} Let $K/K_0$ be a finite Galois extension and $S \subseteq \Sigma_K$ a set of primes of $K$. 
For $x \in \Gal_{K/K_0}$ we call the real number 
\[ \delta_{K/K_0,x}(S) := \lim_{s \rar \ord(x)^{-1} + 0} \frac{\sum_{\fp \in S \cap P_{K/K_0}^x} \N\fp^{-s} }{\sum_{\fp \in P_{K/K_0}^x} \N\fp^{-s} }, \] 
\noindent if it exists, the density of $S$ with respect to $x$ (over $K_0$), or simply, the $x$-density of $S$.
\end{Def}

\bigskip
\bigskip

\begin{rems}\label{rem:x-density} \mbox{}
\begin{itemize}
\item[(i)] The $x$-density satisfies the usual properties: If exists, $\delta_{K/K_0,x}(S)$ is a real number lying in the interval $[0,1]$. If $\delta_{K/K_0,x}(S) = 0$, then for any $S^{\prime} \subseteq S$, the $x$-density $\delta_{K/K_0,x}(S^{\prime})$ also exists and is $0$. By Lemma \ref{lm:pipipip}, finite sets of primes are irrelevant for the $x$-density: if $S$ and $T$ differ only by a finite set of primes, then $\delta_{K/K_0,x}(S)$ exists if and only $\delta_{K/K_0,x}(T)$ exists and if this is the case, then they are equal. Let $S, T$ be two sets of primes of $K$ having an $x$-density. If $S \cap T$ or $S \cup T$ has an $x$-density, then the second set does too and 
\[\delta_{K/K_0,x}(S) + \delta_{K/K_0,x}(T) = \delta_{K/K_0,x}(S \cap T)  + \delta_{K/K_0,x}(S \cup T). \]
\item[(ii)] More interesting, $\delta_{K/K_0,x}$ is essentially independent of $K_0$ as Lemma \ref{lm:dens_indep_of_K_0} below shows. Moreover, the density in $K$ with respect to a fixed Frobenius element over a smaller subfield can be defined simply over $\bQ$, but then (if $K/\bQ$ is not Galois and has Galois closure $K^n$) one has to deal with $\Gal_{K^n/K}$-cosets in $\Gal_{K^n/\bQ}$, instead of elements in a Galois group, which is definitely less nice. Thus we decide to stay by our approach.
\end{itemize}
\end{rems}

\begin{lm}\label{lm:dens_indep_of_K_0}
Assume $K/K_0^{\prime}/K_0$ are finite Galois extensions of $K_0$. Let $x \in \Gal_{K/K_0^{\prime}} \subseteq \Gal_{K/K_0}$ be of order $d$. Then for any set of primes $S$ in $K$ we have: $\delta_{K/K_0^{\prime},x}(S)$ exists if and only $\delta_{K/K_0,x}(S)$ exists and if this is the case, then they are equal. 
\end{lm}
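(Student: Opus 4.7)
The plan is to compare the defining ratios for $\delta_{K/K_0,x}(S)$ and $\delta_{K/K_0',x}(S)$ by comparing the indexing sets $P_{K/K_0}^x$ and $P_{K/K_0'}^x$, and to show that they differ only by a set of primes whose contribution to the Dirichlet-type sums stays bounded as $s \to d^{-1} + 0$. For this I would first carry out a decomposition-group computation in the tower $K \supseteq K_0' \supseteq K_0$. Let $\fp \in \Sigma_K$ be unramified in $K/K_0$, and write $\fp' := \fp \cap K_0'$, $\fp_0 := \fp \cap K_0$. Then $\fp$ is also unramified in $K/K_0'$, one has $D_\fp(K/K_0') = D_\fp(K/K_0) \cap \Gal_{K/K_0'}$ of index $f(\fp'/\fp_0)$ in $D_\fp(K/K_0)$, and $\Frob_{\fp,K/K_0'} = \Frob_{\fp,K/K_0}^{f(\fp'/\fp_0)}$. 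If $\fp \in P_{K/K_0}^x$, then $\langle x\rangle = D_\fp(K/K_0) \subseteq \Gal_{K/K_0'}$, so $D_\fp(K/K_0') = \langle x\rangle$, $f(\fp'/\fp_0) = 1$ and $\Frob_{\fp,K/K_0'} = x$, i.e.\ $\fp \in P_{K/K_0'}^x$. Conversely, for $\fp \in P_{K/K_0'}^x$ (unramified in $K/K_0$) the group $D_\fp(K/K_0)$ has order $d\cdot f(\fp'/\fp_0)$; hence either $f(\fp'/\fp_0) = 1$ and $\fp \in P_{K/K_0}^x$, or $f(\fp'/\fp_0) \geq 2$, in which case $\Frob_{\fp,K/K_0}$ has order greater than $d$ and $\fp \notin P_{K/K_0}^x$. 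The finitely many primes of $K$ ramified in $K/K_0$ but not in $K/K_0'$ (lying over primes ramified in $K_0'/K_0$) are irrelevant for either density by Remark \ref{rem:x-density}(i).

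Thus, up to a finite set, $P_{K/K_0}^x \subseteq P_{K/K_0'}^x$, and the difference $D := P_{K/K_0'}^x \sm P_{K/K_0}^x$ consists of primes $\fp$ with $f(\fp'/\fp_0) \geq 2$, hence with $\N\fp = \N\fp_0^{d\,f(\fp'/\fp_0)} \geq \N\fp_0^{2d}$. Bounding the number of primes $\fp$ above a given $\fp_0$ by $[K:K_0]$ gives
\[
\sum_{\fp \in D} \N\fp^{-s} \; \leq \; [K:K_0] \sum_{\fp_0 \in \Sigma_{K_0}} \N\fp_0^{-2ds},
\]
which is holomorphic on $\bH_{(2d)^{-1}}$ and in particular stays bounded as $s \to d^{-1}+0$.

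Writing $g_{K_0}(s) := \sum_{\fp \in P_{K/K_0}^x} \N\fp^{-s}$, $f_{S,K_0}(s) := \sum_{\fp \in S \cap P_{K/K_0}^x} \N\fp^{-s}$ and analogously with $K_0'$ in place of $K_0$, the preceding paragraph gives $g_{K_0'} = g_{K_0} + B_0$ and $f_{S,K_0'} = f_{S,K_0} + B_S$ with $B_0, B_S$ bounded near $s = d^{-1}$. Since $g_{K_0'}(s) \to \infty$ as $s \to d^{-1}+0$ by Lemma \ref{lm:pipipip}, a direct estimate of $f_{S,K_0'}/g_{K_0'} - f_{S,K_0}/g_{K_0}$ (using also the trivial inequality $|f_{S,K_0}| \leq g_{K_0}$ for real $s > d^{-1}$) shows that this difference tends to $0$; hence the two ratios have the same limiting behaviour, both limits exist simultaneously, and when they exist they coincide. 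The only delicate point is the decomposition-group bookkeeping of the first paragraph; once the difference $D$ has been localised at primes with $f(\fp'/\fp_0) \geq 2$, the rest is routine.
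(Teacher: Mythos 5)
Your argument is correct and is essentially the paper's own proof: the paper likewise observes that $P_{K/K_0'}^x \cap \cs(K_0'/K_0)(K) = P_{K/K_0}^x$ and that the complementary primes have inertia degree over $K_0$ (hence over $\bQ$) greater than $d$, so their Dirichlet-type sum stays bounded as $s \to d^{-1}+0$, which forces the two defining ratios to have the same limiting behaviour. Your decomposition-group bookkeeping and the explicit bound $\sum_{\fp\in D}\N\fp^{-s}\leq [K:K_0]\sum_{\fp_0}\N\fp_0^{-2ds}$ simply make precise what the paper asserts in one line.
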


\begin{proof}
Indeed, the sum $\sum_{P_{K/K_0^{\prime}}^x \sm \cs(K_0^{\prime}/K_0)(K)} \N\fp^{-s}$ is bounded for $s \rar d^{-1} + 0$ (since the inertia degree over $K_0$ and hence also over $\bQ$ of primes in this set is bigger than $d$) and $P_{K/K_0^{\prime}}^x \cap \cs(K_0^{\prime}/K_0)(K) = P_{K/K_0}^x$ and hence by Lemma \ref{lm:pipipip}:

\[ \delta_{K/K_0^{\prime},x}(S) = \lim_{s \rar d^{-1} + 0} \frac{\sum_{\fp \in S \cap P_{K/K_0^{\prime}}^x} \N\fp^{-s} }{\sum_{\fp \in P_{K/K_0^{\prime}}^x} \N\fp^{-s} } = \lim_{s \rar d^{-1} + 0} \frac{\sum_{\fp \in S \cap P_{K/K_0}^x} \N\fp^{-s} }{\sum_{\fp \in P_{K/K_0}^x} \N\fp^{-s} } = \delta_{K/K_0,x}(S). \]
\noindent (when both exist). 
\end{proof}


\section{Pull-back properties of $\delta_{K/K_0,x}$} \label{sec:pull-back-properties} 

We fix the following setting in this section: $L/K/K_0$ are finite Galois extensions, $G := \Gal_{L/K_0}$, $H := \Gal_{L/K}$, $\pi \colon G \tar G/H$ the natural projection, $x \in G/H$. For $y \in \pi^{-1}(x)$, let $\pr = \pr_{L/K} \colon P_{L/K_0}^y \rar P_{K/K_0}^x$ denote the restriction of primes from $L$ to $K$.

\begin{lm} \label{lm:H-conj_rel}
Let $y,z \in \pi^{-1}(x)$. Then $\pr(P_{L/K_0}^y), \pr(P_{L/K_0}^z)$ are either disjoint or equal. They are equal if and only if $y, z$ are $H$-conjugate.
\end{lm}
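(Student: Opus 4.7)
The plan is to reduce the lemma to two standard facts: (a) the group $H = \Gal_{L/K}$ acts transitively on the set of primes of $L$ above any fixed prime $\fp$ of $K$, and (b) Frobenius transforms equivariantly under this action, that is, $\Frob_{h\fP} = h \Frob_{\fP} h^{-1}$ in $G$ for every $h \in H$ and every unramified prime $\fP$ of $L$. Note that if $\pi(y) = x$ then $\pi(hyh^{-1}) = x$ as well, since $\pi(h)=1$ for $h \in H$, so that $H$-conjugation indeed preserves $\pi^{-1}(x)$.

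First I would handle the direction \emph{non-disjoint implies $H$-conjugate}. Suppose $\fp \in \pr(P_{L/K_0}^y) \cap \pr(P_{L/K_0}^z)$. Choose primes $\fP, \fP'$ of $L$ above $\fp$ with $\Frob_{\fP} = y$ and $\Frob_{\fP'} = z$. By (a), there exists $h \in H$ with $h\fP = \fP'$, and then (b) gives $z = h y h^{-1}$, proving that $y$ and $z$ are $H$-conjugate.

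Conversely, assume $z = hyh^{-1}$ for some $h \in H$. For any $\fP \in P_{L/K_0}^y$, the prime $h\fP$ has Frobenius $h y h^{-1} = z$ by (b), so $h\fP \in P_{L/K_0}^z$; moreover $h\fP$ and $\fP$ restrict to the same prime of $K$ because $h$ fixes $K$. This yields $\pr(P_{L/K_0}^y) \subseteq \pr(P_{L/K_0}^z)$, and by symmetry the two images coincide. Combining both halves, the dichotomy follows: if $\pr(P_{L/K_0}^y)$ and $\pr(P_{L/K_0}^z)$ meet, then $y,z$ are $H$-conjugate and the images are equal; otherwise they are disjoint.

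There is no genuine obstacle here beyond careful bookkeeping of where the Frobenius elements live. The only point one must be attentive to is that we are working with Frobenius elements in the bigger group $G$ (not in $H$), so the $H$-conjugation statement in (b) has to be applied inside $G$; once this is kept in mind, both directions of the proof are immediate.
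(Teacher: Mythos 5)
Your proof is correct and follows essentially the same route as the paper's: both directions rest on the conjugation formula $\Frob_{h\fP}=h\Frob_{\fP}h^{-1}$ together with transitivity of the Galois action on primes. The only cosmetic difference is that you invoke transitivity of $H$ on the primes of $L$ above $\fp$ directly, whereas the paper extracts the conjugating element from the coset description of the primes over $K_0$ and then adjusts it by a power of $y$ to land in $H$.
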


\begin{proof}
Assume that $\pr(P_{L/K_0}^y) \cap \pr(P_{L/K_0}^z) \neq \emptyset$. Then there are primes $\fP \in P_{L/K_0}^y, \fQ \in P_{L/K_0}^z$ with $\fP|_K = \fQ|_K =: \fp$. Let $\fp_0 := \fp|_{K_0}$. The primes in $L$ lying over $\fp_0$ are in 1:1-correspondence with cosets of $\langle y \rangle = D_{\fP,L/K_0} \subseteq G$:
\[ G/ \langle y \rangle \stackrel{\sim}{\longrightarrow} S_{\fp_0}(L), \quad g \langle y \rangle \mapsto g\fP. \]
The Frobenius of $g\fP$ is $gyg^{-1}$; after reduction modulo $H$ we obtain the same correspondence for $K$: $(G/H)/\langle x \rangle = G/H\langle y \rangle \stackrel{\sim}{\rar} S_{\fp_0}(K)$ and $\fP,g\fP$ lie over the same prime of $K$ if and only if $\pi(g) \in \langle x \rangle$, i.e., $g \in H\langle y \rangle$. So with our assumption we get $\fQ = g\fP$ for some $g \in H \langle y \rangle$ with $gyg^{-1} = z$. By multiplying with a power of $y$, we can modify $g$ such that $g \in H$.

Assume conversely that for $y,z \in \pi^{-1}(x)$ there is some $g \in H$ with $gyg^{-1} = z$. Then we claim that $\pr(P_{L/K_0}^y) = \pr(P_{L/K_0}^z)$. Indeed, let $\fp \in \pr(P_{L/K_0}^y)$ with preimage $\fP \in P_{L/K_0}^y$. Using the above description of primes via cosets, it is immediate to see that $g\fP \in P_{L/K_0}^z$ also lies over $\fp$.
\end{proof}

For an $H$-conjugacy class $C$ in $\pi^{-1}(x)$, let $M_C \subseteq P_{K/K_0}^x$ denote the image of $P_{L/K_0}^y$ under $pr$ for some (any) $y \in C$. Thus if $\Ram(L/K)$ denotes the set of primes of $K$, which ramify in $L$, then we have a disjoint decomposition 
\[ P_{K/K_0}^x = (\Ram(L/K) \cap P^x_{K/K_0}) \cup \bigcup_{C \subseteq \pi^{-1}(x)} M_C, \] 
\noindent where the first set is finite and the union is taken over all $H$-conjugacy classes inside $\pi^{-1}(x)$. We have the following generalization of Chebotarev's density theorem (observe that $\abs{H} = \abs{\pi^{-1}(x)}$):

\begin{prop}\label{prop:verallgCheb}
Let $L/K/K_0,\pi,x$ be as above. Let $C$ be an $H$-conjugacy class in $\pi^{-1}(x)$. Then 
\[ \delta_{K/K_0,x}(M_C) = \frac{\abs{C}}{\abs{H}}. \]
\end{prop}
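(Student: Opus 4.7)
Fix $y\in C$ and analyse the restriction of primes $\pr\colon P_{L/K_0}^{y}\to M_C$. The first step is to establish the combinatorial input: $\pr$ is surjective (by definition of $M_C$) and has constant fibre size
\[ N := \frac{\lvert Z_H(y) \rvert}{\lvert H\cap\langle y\rangle\rvert}. \]
For this, given $\fp\in M_C$ and any $\fP_0\in P_{L/K_0}^{y}$ above $\fp$, the primes of $L$ over $\fp$ are the $H$-orbit $\{h\fP_0\}$ with stabiliser $H\cap\langle y\rangle$, and among them those with Frobenius equal to $y$ are exactly the classes of $h\in Z_H(y)$ modulo $H\cap\langle y\rangle$. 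Along the way one also reads off the residue degree: the decomposition group of $\fP_0$ over $K$ is $\langle y\rangle\cap H$, hence $f(\fP_0/\fp)=m$ with $m:=\lvert\langle y\rangle\cap H\rvert$, and $m\cdot d = \ord(y)=:n$ because $\pi(\langle y\rangle)=\langle x\rangle$ has order $d=\ord(x)$.

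With this bookkeeping in place, I would translate the sums. Since $\N\fP_0=\N\fp^{m}$ for every $\fP_0$ in the fibre, the equality of Dirichlet series
\[ N\sum_{\fp\in M_C}\N\fp^{-s} \;=\; \sum_{\fP\in P_{L/K_0}^{y}}\N\fP^{-s/m} \]
holds for $\Re(s)>1/d$. Now apply Lemma \ref{lm:pipipip} to the Galois extension $L/K_0$ and the element $y\in G$: with the asymptotic obtained from its proof, $\sum_{\fP\in P_{L/K_0}^y}\N\fP^{-t}\sim \frac{1}{n}\log\frac{1}{t-1/n}$ as $t\to 1/n^{+}$. Substituting $t=s/m$ and using the identity $s/m-1/n=(s-1/d)/m$, the right hand side is asymptotic to $\frac{1}{n}\log\frac{1}{s-1/d}$ as $s\to 1/d^{+}$ (the additive constant $-\frac{1}{n}\log m$ is absorbed into the $\sim$). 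Hence
\[ \sum_{\fp\in M_C}\N\fp^{-s}\;\sim\;\frac{1}{Nn}\log\frac{1}{s-1/d}. \]

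The same recipe applied to $K/K_0$ and $x$ (essentially the content of Lemma \ref{lm:pipipip}) shows that the denominator satisfies $\sum_{\fp\in P_{K/K_0}^{x}}\N\fp^{-s}\sim \frac{1}{d}\log\frac{1}{s-1/d}$. Taking the quotient gives
\[ \delta_{K/K_0,x}(M_C)=\frac{d}{Nn}=\frac{d}{Nmd}=\frac{1}{Nm}=\frac{1}{\lvert Z_H(y)\rvert}=\frac{\lvert C\rvert}{\lvert H\rvert}, \]
using $Nm=\lvert Z_H(y)\rvert$ and $\lvert C\rvert=\lvert H\rvert/\lvert Z_H(y)\rvert$. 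Existence of the limit is built into the argument, since both numerator and denominator are shown to be genuinely asymptotic to an explicit logarithmic singularity. The only delicate point is the exponent juggling between $s$, $s/m$, $1/n$ and $1/d$; once the identities $n=md$ and $s/m-1/n=(s-1/d)/m$ are in hand, the rest is formal.
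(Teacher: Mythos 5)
Your proof is correct and follows essentially the same route as the paper: you rederive the fiber count of $\pr\colon P_{L/K_0}^{y}\to M_C$ (the paper's Lemma \ref{lm:Faser_gamma}, noting $H\cap\langle y\rangle=\langle y^{d}\rangle$) and then reduce to the classical Chebotarev density theorem over $K_0$ via a change of variables in $s$. The only cosmetic difference is that you normalize the two series by explicit logarithmic asymptotics $\sim c\log\frac{1}{s-1/d}$, whereas the paper substitutes $t=ds$ in numerator and denominator simultaneously and invokes Chebotarev in the Dirichlet-density (ratio) form, so it never needs the asymptotic expansion itself.
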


When setting $x = 1$, this reduces to the classical Chebotarev's density theorem for the Dirichlet density. Fortunately, the proof of this proposition does not need any new L-functions, it simply follows from the classical Chebotarev.

\begin{lm}\label{lm:Faser_gamma}
Let $L/K/K_0$, $\pi$, $x$ be as above. Let $d$ be the order of $x$ in $G/H$. Let $y \in \pi^{-1}(x)$ and let $C \subseteq \pi^{-1}(x)$ denote the $H$-conjugacy class of $y$. Then the map $\pr \colon P_{L/K_0}^y \tar M_C$ is surjective and $\gamma_{L/K}(y)$-to-$1$, where $\gamma_{L/K}(y) :=\frac{\abs{Z_H(y)}}{\abs{\langle y^d \rangle}}$.
\end{lm}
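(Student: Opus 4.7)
The plan is as follows. Surjectivity of $\pr \colon P_{L/K_0}^y \tar M_C$ is immediate from the very definition of $M_C$ as $\pr(P_{L/K_0}^y)$, so only the fibre count needs work.

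First, I would fix $\fp \in M_C$ and a prime $\fP \in P_{L/K_0}^y$ with $\pr(\fP) = \fp$; set $\fp_0 := \fp|_{K_0}$. Since $\fP$ is unramified with Frobenius $y$, its decomposition group in $G$ equals $\langle y \rangle$, and the transitive $G$-action on $S_{\fp_0}(L)$ yields the standard bijection $G/\langle y\rangle \stackrel{\sim}{\rar} S_{\fp_0}(L)$, $g\langle y\rangle \mapsto g\fP$, under which the Frobenius of $g\fP$ equals $gyg^{-1}$.

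Next, reducing modulo $H$ exactly as in the proof of Lemma \ref{lm:H-conj_rel} gives the analogous bijection for $K$, so that $g\fP$ lies above $\fp$ if and only if $g \in H\langle y\rangle$. Hence the primes of $L$ above $\fp$ are parametrized by $H\langle y\rangle/\langle y\rangle \cong H/(H\cap\langle y\rangle)$. A short check identifies $H\cap\langle y\rangle = \langle y^d\rangle$: one has $y^d \in H$ because $\pi(y) = x$ has order $d$, while conversely $y^k \in H$ forces $x^k = 1$, hence $d \mid k$.

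Finally, among the $|H|/|\langle y^d\rangle|$ primes above $\fp$, those in $P_{L/K_0}^y$ correspond to cosets $h\langle y^d\rangle \in H/\langle y^d\rangle$ satisfying $hyh^{-1} = y$, i.e., to cosets with representative in $Z_H(y)$. Since $\langle y^d\rangle \subseteq Z_H(y)$, these cosets form the subgroup $Z_H(y)/\langle y^d\rangle$, whose order is $|Z_H(y)|/|\langle y^d\rangle| = \gamma_{L/K}(y)$ independently of the choice of $\fp$. The only point requiring care is to distinguish ``Frobenius equal to $y$'' (the defining condition of $P_{L/K_0}^y$) from ``Frobenius conjugate to $y$'' (which would give the full Chebotarev set); beyond that the argument is purely group-theoretic coset bookkeeping, and no analytic or density input is needed.
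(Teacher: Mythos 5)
Your proposal is correct and follows essentially the same route as the paper: both parametrize the primes of $L$ above $\fp$ by cosets of the decomposition group $\langle y\rangle$ inside $H\langle y\rangle$, identify $H\cap\langle y\rangle=\langle y^d\rangle$, and cut out the fibre of $\pr$ by the condition $hyh^{-1}=y$, yielding $Z_H(y)/\langle y^d\rangle$. The paper phrases the count as the surjection $Z_H(y)\tar (Z_G(y)\cap H\langle y\rangle)/\langle y\rangle$ with kernel $\langle y^d\rangle$, which is the same bookkeeping in an isomorphic coset space.
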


\begin{proof}[Proof of Lemma \ref{lm:Faser_gamma}]
The surjectivity follows from definition. Using the description of primes via cosets modulo the decomposition group, one sees easily that for $\fp \in M_C$, the primes in $S_{\fp}(L) \cap P_{L/K_0}^y$ are in one-to-one correspondence with elements in the group $Z_G(y) \cap H\langle y \rangle / \langle y \rangle$. One sees then that the composition 
\[Z_H(y) \har Z_G(y) \cap H \langle y \rangle \tar Z_G(y) \cap H \langle y \rangle / \langle y \rangle \]
\noindent is surjective and its kernel is $\langle y^d \rangle$.
\end{proof}

\begin{proof}[Proof of Proposition \ref{prop:verallgCheb}]
By preceding lemmas, we have the following diagram:

\centerline{
\begin{xy}
\xymatrix{
& P_{L/K_0}^y \ar@{->>}[d]_{\gamma_{L/K}(y)} \ar@/^2pc/[dd]^{\gamma_{L/K_0}(y)} \\
P_{K/K_0}^x \ar@{->>}[d]_{\gamma_{K/K_0}(x)} & M_C \ar@{_{(}->}[l] \ar@{->>}[d] \\ 
P_{K/K_0}(x) & \ar@{_{(}->}[l] P_{L/K_0}(y)} 
\end{xy}
}

\noindent in which any vertical map is surjective and has fibers of equal cardinality, and the number on the arrow denotes the dergee ($\gamma$ is as in Lemma \ref{lm:Faser_gamma}).  Thus the lower right map is $\beta(y):1$, with $\beta(y) = \frac{\abs{Z_G(y)}}{\abs{\langle x \rangle} \abs{Z_H(y)}}$. It follows:

\begin{equation} \nonumber
\delta_{K/K_0,x}(M_C) = \lim_{s \rar d^{-1}+0} \frac{\sum_{M_C} \N\fp^{-s}}{\sum_{P_{K/K_0}^x} \N\fp^{-s}} = \lim_{t \rar 1+0} \frac{\beta(y)\sum_{\fp \in P_{L/K_0}(y)} \N\fp^{-t} }{\gamma_{K/K_0}(x)\sum_{\fp \in P_{K/K_0}(x)} \N\fp^{-t} } = \frac{\beta(y) \delta_{K_0}(P_{L/K_0}(y)) }{\gamma_{K/K_0}(x) \delta_{K_0}(P_{K/K_0}(x))}.
\end{equation}

\noindent where $\delta_{K_0}$ denotes the usual Dirichlet density on $\Sigma_{K_0}$. By Chebotarev we have: $\delta_{K_0} (P_{L/K_0}(y)) = \frac{\abs{C(y,G)}}{\abs{G}} = \frac{1}{\abs{Z_{G}(y)}}$ and $\delta_{K_0}(P_{K/K_0}(x)) = \frac{1}{\abs{Z_{G/H}(x)}}$. Hence we obtain:

\[ \delta_{K/K_0,x}(M_C) = \frac{\beta(y) \abs{Z_{G/H}(x)}}{\gamma_{K/K_0}(x) \abs{Z_G(y)}} = \frac{\abs{Z_G(y)}}{\abs{\langle x \rangle}\abs{Z_H(y)}} \frac{\abs{\langle x \rangle}}{\abs{Z_{G/H}(x)}} \frac{\abs{Z_{G/H}(x)}}{\abs{Z_G(y)}} = \frac{1}{\abs{Z_H(y)}} = \frac{\abs{C}}{\abs{H}}. \qedhere \]

\end{proof}

Now we can derive the pull-back behavior of $\delta_{K/K_0,x}$.

\begin{cor}\label{cor:bc_of_delta_x}
Let $y \in \pi^{-1}(x)$ and let $C$ be its $H$-conjugacy class in $\pi^{-1}(x)$. Then 
\[ \delta_{L/K_0,y}(S_L) = \delta_{K/K_0,x}(M_C)^{-1}\delta_{K/K_0,x}(S \cap M_C) = \frac{\abs{H}}{\abs{C}} \delta_{K/K_0,x}(S \cap M_C) \]
\noindent if all densities exist.
\end{cor}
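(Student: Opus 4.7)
The plan is to reduce the corollary to a direct computation using Proposition \ref{prop:verallgCheb} together with the fiber description in Lemma \ref{lm:Faser_gamma}. The second equality in the statement is immediate from Proposition \ref{prop:verallgCheb}, which gives $\delta_{K/K_0,x}(M_C) = |C|/|H|$, so the real content is the first equality
\[ \delta_{L/K_0,y}(S_L) \;=\; \lim_{t \to d^{-1}+0} \frac{\sum_{\fp \in S \cap M_C}\N\fp^{-t}}{\sum_{\fp \in M_C}\N\fp^{-t}}, \]
where $d = \ord(x)$.

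First I would set up the relation between the primes: by definition $S_L$ is the pull-back of $S$, so a prime $\fP \in P_{L/K_0}^y$ lies in $S_L$ iff $\pr(\fP) \in S$. Since $\pr(P_{L/K_0}^y) = M_C$ by Lemma \ref{lm:Faser_gamma}, restriction therefore induces a surjection $S_L \cap P_{L/K_0}^y \twoheadrightarrow S \cap M_C$ whose fibers, by the same lemma, all have cardinality $\gamma_{L/K}(y)$. Next, I would record the relation between norms: if $\fp_0 = \fP|_{K_0}$, then $\N\fp_0$ is raised to the residue degree, which is $\ord(y) =: e$ for $\fP$ and $\ord(x) = d$ for $\fp = \pr(\fP)$; hence $\N\fP = \N\fp^{e/d}$.

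Using these two facts I would rewrite
\[ \sum_{\fP \in P_{L/K_0}^y}\N\fP^{-s} \;=\; \gamma_{L/K}(y)\sum_{\fp \in M_C}\N\fp^{-s e/d}, \]
and identically for the numerator with $S$ inserted. Substituting $t := se/d$, which sends $s \to e^{-1}+0$ to $t \to d^{-1}+0$, the factor $\gamma_{L/K}(y)$ cancels and one obtains exactly the displayed formula above. Finally, dividing numerator and denominator by $\sum_{\fp \in P_{K/K_0}^x}\N\fp^{-t}$ and invoking the existence of the $x$-densities $\delta_{K/K_0,x}(S \cap M_C)$ and $\delta_{K/K_0,x}(M_C)$ (and the non-vanishing of the denominator by Lemma \ref{lm:pipipip}) lets me split the limit into the quotient $\delta_{K/K_0,x}(S\cap M_C)/\delta_{K/K_0,x}(M_C)$, and Proposition \ref{prop:verallgCheb} finishes the proof.

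The only point that needs a little care is the change of variable $t = se/d$ together with the splitting of the limit: one has to argue that both sums in $t$ converge for $t > d^{-1}$ (which is built into the uniform convergence argument from Lemma \ref{lm:pipipip}) and that dividing by $\sum_{\fp \in P_{K/K_0}^x}\N\fp^{-t}$, which tends to $\infty$ as $t \to d^{-1}+0$, is legitimate because both quotients involved are assumed to have a limit. This is bookkeeping rather than a serious obstacle — no new analytic input is required beyond Lemma \ref{lm:pipipip} and Chebotarev, already used in proving Proposition \ref{prop:verallgCheb}.
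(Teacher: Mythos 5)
Your proposal is correct and follows essentially the same route as the paper: the change of variables $t = \tfrac{e}{d}s$, the use of the $\gamma_{L/K}(y)$-to-$1$ fiber structure of $\pr\colon P_{L/K_0}^y \to M_C$ together with the fact that $S_L$ is defined over $K$, and then Proposition \ref{prop:verallgCheb} for the second equality. You merely spell out explicitly the norm relation $\N\fP = \N\fp^{e/d}$ and the cancellation of $\gamma_{L/K}(y)$, which the paper leaves implicit.
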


\begin{proof}
Let $e$ denote the order of $y$ in $G$ and $d$ the order of $x$ in $G/H$. Then 
\begin{eqnarray*} 
\delta_{L/K_0,y}(S_L) &=& \lim_{s \rar e^{-1}+0} \frac{\sum_{\fp \in S_L \cap P_{L/K_0}^y} \N\fp^{-s} }{\sum_{\fp \in P_{L/K_0}^y} \N\fp^{-s}} = \lim_{t \rar d^{-1}+0} \frac{\sum_{\fp \in S \cap M_C} \N\fp^{-t} }{\sum_{\fp \in M_C} \N\fp^{-t}} \\
&=& \delta_{K/K_0,x}(M_C)^{-1}\delta_{K/K_0,x}(S \cap M_C), 
\end{eqnarray*}

\noindent where we made a change of variables by replacing $s$ by $t := \frac{e}{d} s$ and used the fact that $S_L$ is defined over $K$. Proposition \ref{prop:verallgCheb} finishes the proof.
\end{proof}

The special case $x = y = 1$ in Corollary \ref{cor:bc_of_delta_x} gives the well-known formula 
\[ \delta_L(S_L) = [L:K]\delta_K(S \cap \cs(K/K_0)).\] 
\noindent for the Dirichlet density. We compute the $x$-density of pull-backs of Chebotarev sets.

\begin{cor}\label{cor:counting_inf_dens_of_Chebotarevs}
Let $L,M$ be two finite Galois extensions of $K$. Let $\sigma \in \Gal_{M/K}$, $x \in \Gal_{L/K}$ with images $\bar{\sigma}, \bar{x}$ in $\Gal_{L \cap M/K}$ respectively. Let $S \backsimeq P_{M/K}(\sigma)$. Then 
\[ \delta_{L/K,x}(S_L) = \begin{cases} \frac{ \abs{C((x,\sigma), \Gal_{LM/K})} }{ [M:L \cap M]\abs{C(x, \Gal_{L/K})} } &  \text{ if } \bar{\sigma} = \bar{x}, \\ 0 & \text{ if } \bar{\sigma} \neq \bar{x}, \end{cases} \]
where we write $(x,\sigma)$ for the unique element of $\Gal_{LM/K} \cong \Gal_{L/K} \times_{\Gal_{L \cap M/K}} \Gal_{M/K}$ mapping to $x,\sigma$ under both projections.
\end{cor}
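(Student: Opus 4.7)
The plan is to decompose $S_L$ along $H'$-conjugacy classes, where $N := LM$, $H' := \Gal_{N/L}$, and $\pi \colon \Gal_{N/K} \tar \Gal_{L/K}$, $\pi_M \colon \Gal_{N/K} \tar \Gal_{M/K}$ are the two natural projections. First I would reduce to the case $S = P_{M/K}(\sigma)$: the symmetric difference $S \triangle P_{M/K}(\sigma)$ has Dirichlet density $0$ in $\Sigma_K$, and an estimate along the lines of the proof of Lemma \ref{lm:pipipip} shows that its pull-back to $L$ has $x$-density $0$.

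Next, apply Proposition \ref{prop:verallgCheb} to the tower $N/L/K$: up to finitely many ramified primes,
\[ P_{L/K}^x \;=\; \bigcup_C M_C, \qquad \delta_{L/K,x}(M_C) \;=\; \frac{|C|}{|H'|}, \]
where $C$ runs over the $H'$-conjugacy classes in $\pi^{-1}(x)$. Fix $y \in C$. By Lemma \ref{lm:Faser_gamma}, every $\fp \in M_C$ is the restriction of some $\fP \in P_{N/K}^y$, so the Frobenius class of $\fp|_K$ in $\Gal_{M/K}$ is $C(\pi_M(y), \Gal_{M/K})$. Hence $M_C \subseteq S_L$ (up to ramification) when $\pi_M(y) \in C(\sigma, \Gal_{M/K})$, and $M_C \cap S_L$ is essentially empty otherwise; this dichotomy depends only on $C$, because $H'$-conjugation on $y$ corresponds to $\pi_M(H')$-conjugation on $\pi_M(y)$ in $\Gal_{M/K}$. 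Summing yields
\[ \delta_{L/K,x}(S_L) \;=\; \frac{|U|}{|H'|}, \qquad U \;:=\; \{y \in \pi^{-1}(x) \colon \pi_M(y) \in C(\sigma, \Gal_{M/K})\}. \]

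Finally, I would count $|U|$ via the fiber-product identification $\Gal_{N/K} = \Gal_{L/K} \times_{\Gal_{L\cap M/K}} \Gal_{M/K}$, under which $\pi^{-1}(x) = \{(x,\tau) : \bar\tau = \bar x\}$. If $\bar x$ is not $\Gal_{L \cap M/K}$-conjugate to $\bar\sigma$ (which, after replacing $\sigma$ by an appropriate element of its $\Gal_{M/K}$-conjugacy class -- a move that preserves $P_{M/K}(\sigma)$ -- is equivalent to $\bar\sigma \neq \bar x$), then no $\tau \in C(\sigma, \Gal_{M/K})$ satisfies $\bar\tau = \bar x$, so $U = \emptyset$. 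Otherwise one arranges $\bar\sigma = \bar x$, whence $(x,\sigma) \in \pi^{-1}(x)$, and the projection $\pi \colon C((x,\sigma), \Gal_{N/K}) \tar C(x, \Gal_{L/K})$ is surjective with all fibers of equal cardinality by the standard transitivity of the conjugation action; since the fiber over $x$ is exactly $U$, one obtains $|U| = |C((x,\sigma), \Gal_{N/K})| / |C(x, \Gal_{L/K})|$. Combined with $|H'| = [LM:L] = [M : L \cap M]$, this gives the claimed formula. The main subtlety is the conjugacy-representative bookkeeping underlying the case split, together with the check that $H'$-conjugation on elements of $\pi^{-1}(x)$ correctly translates into conjugation inside $\Gal_{M/K}$ under $\pi_M$.
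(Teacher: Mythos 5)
Everything in your argument up to and including the identity $\delta_{L/K,x}(S_L)=\abs{U}/\abs{H'}$ is correct, and this part is a genuinely different route from the paper's: the paper applies Corollary \ref{cor:bc_of_delta_x} with the degenerate lower layer $K/K$ to reduce the claim to ordinary Dirichlet densities of $P_{L/K}(x)\cap P_{M/K}(\sigma)$ on $K$ and then quotes classical Chebotarev, whereas you decompose $P^x_{L/K}$ into the sets $M_C$ via Proposition \ref{prop:verallgCheb} applied to $LM/L/K$. The gap is in your very last step. The fiber of $C((x,\sigma),\Gal_{LM/K})\tar C(x,\Gal_{L/K})$ over $x$ is $C((x,\sigma),\Gal_{LM/K})\cap\pi^{-1}(x)$, which is contained in $U$ but is in general strictly smaller: $U$ is the union of the slices $D\cap\pi^{-1}(x)$ over \emph{all} $\Gal_{LM/K}$-conjugacy classes $D$ contained in $\pi^{-1}(C(x,\Gal_{L/K}))\cap\pi_M^{-1}(C(\sigma,\Gal_{M/K}))$ (each such $D$ meets $\pi^{-1}(x)$ because $\pi(D)=C(x,\Gal_{L/K})$), and when $L\cap M\neq K$ this set need not be a single class. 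Concretely, let $\Gal_{LM/K}$ be the fiber product $D_4\times_{D_4/Z}D_4$ of two copies of the dihedral group $D_4=\langle r,s\rangle$ of order $8$ over its central quotient ($Z=\{1,r^2\}$ the center), with $\Gal_{LM/L}=1\times Z$, $\Gal_{LM/M}=Z\times 1$, and $x=\sigma=s$. Then $\pi^{-1}(x)=\{(s,s),(s,r^2s)\}=U$; but conjugation by any $(h_1,h_2)\in\Gal_{LM/K}$ applies the \emph{same} inner automorphism to both coordinates (since $h_2\in h_1Z$ and $Z$ is central), so $C((s,s),\Gal_{LM/K})=\{(s,s),(r^2s,r^2s)\}$ and the displayed formula gives $2/(2\cdot 2)=1/2$, while your $\abs{U}/\abs{H'}=2/2=1$ is the true value (here $P_{L/K}(x)\subseteq P_{M/K}(\sigma)$ up to finitely many primes).

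So the step that fails is precisely the step that would establish the stated formula --- which is in fact false in this example. You are in good company: the paper's own proof commits the parallel error when it asserts $P_{L/K}(x)\cap P_{M/K}(\sigma)=P_{LM/K}((\sigma,x))$; in general the left-hand side is the union of $P_{LM/K}(\tau)$ over all conjugacy classes $\tau$ as above, and your $\abs{U}/\abs{H'}=\sum_\tau\abs{C(\tau,\Gal_{LM/K})}/\bigl([M:L\cap M]\,\abs{C(x,\Gal_{L/K})}\bigr)$ is the correct general statement. The formula in the corollary holds exactly when there is only one such class, e.g.\ when $L\cap M=K$ and $\Gal_{LM/K}$ is a direct product --- which is the only case in which the corollary is used later (Section \ref{sec:unif_bouns_via_infdens}, where the direct-product decomposition is invoked explicitly). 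Your bookkeeping of conjugacy representatives in the case $\bar\sigma\neq\bar x$ matches the paper's implicit convention and is fine.
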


\begin{proof}
Indeed, apply Corollary \ref{cor:bc_of_delta_x} to $\delta_{K/K,1}$ and $\delta_{L/K,x}$. Then $\delta_{K/K,1} = \delta_K$ is the Dirichlet density and we have $M_x = P_{L/K}(x)$ and 
\[ \delta_{L/K,x}(S_L) = \delta_K(P_{L/K}(x))^{-1} \delta_K(P_{L/K}(x) \cap S) = \delta_K(P_{L/K}(x))^{-1} \delta_K(P_{L/K}(x) \cap P_{M/K}(\sigma)). \]
\noindent The intersection $P_{L/K}(x) \cap P_{M/K}(\sigma)$ is empty unless $\bar{\sigma} = \bar{x}$, hence we can assume equality. Under this assumption, we have $P_{L/K}(x) \cap P_{M/K}(\sigma) = P_{LM/K}((\sigma,x))$ and the corollary follows immediately from Chebotarev.
\end{proof}

\section{Densities associated to characters} \label{sec:gen_of_gen_dens_to_char_dens} 

\begin{Def}
Let $K/K_0$ be finite Galois and $S \subseteq \Sigma_K$ a subset.
\begin{itemize}
\item[(i)]  We call $S$ \emph{mesurable} (over $K_0$), if for all $x \in \Gal_{K/K_0}$ the density $\delta_{K/K_0,x}(S)$ exists (this is essentially independent of $K_0$).
\item[(ii)] Assume that $S$ is mesurable. Then define the \emph{characteristic function} of $S$ as 
\[ \chi_{K/K_0,S} \colon \Gal_{K/K_0} \rar [0,1], \quad x \mapsto \delta_{K/K_0,x}(S). \]
\end{itemize}
\end{Def}


\begin{rem}
Notice that $\chi_{K/K_0,S}$ is only a real-valued function on $\Gal_{K/K_0}$, which is not necessarily a class function. But, if $S$ is defined over $K_0$, it is a class function (clearly, the converse is in general not true).
\end{rem}

For a finite group $G$, let $G(\bC)$ be the set of complex valued functions on $G$. Then we have the inner product on $G(\bC)$ defined by 
\[ \langle \chi,\psi \rangle_G := \abs{G}^{-1} \sum_{x \in G} f(x)\overline{g(x)} \]

\noindent for all $\chi, \psi \in G(\bC)$. If $G = \Gal_{L/K}$ we also write $\langle \cdot , \cdot \rangle_{L/K}$ (or even $\langle \cdot , \cdot \rangle_L$ if $K$ is clear from the context) instead of $\langle \cdot , \cdot \rangle_{\Gal_{L/K}}$.

\begin{Def} \label{def:psi-mes}
Let $K/K_0$ be finite Galois. For any $\psi \in \Gal_{K/K_0}(\bC)$ we define the $\bC$-valued function $\delta_{K/K_0,\psi}$ on the set of all mesurable subsets of $\Sigma_K$ by 
\[ \delta_{K/K_0,\psi} (S) :=  \langle \psi, \chi_{K/K_0,S} \rangle_K \]
for any mesurable set $S$. We say that $\delta_{K/K_0,\psi}$ is a \emph{density}, if for all mesurable $S$ it takes values in the real unit interval $[0,1]$ and $\delta_{K/K_0,\psi}(\Sigma_K) = 1$.
\end{Def}

\begin{lm}
Let $K/K_0$ be finite Galois and $\psi \in G_{K/K_0}(\bC)$. 

\begin{itemize}
\item[(i)]  Let $S, T$ be mesurable. If one of the sets $S \cap T, S \cup T$ is mesurable, then the second set is too and
\[\delta_{K/K_0,\psi}(S) + \delta_{K/K_0,\psi}(T) = \delta_{K/K_0,\psi}(S \cap T)  + \delta_{K/K_0,\psi}(S \cup T). \]
\item[(ii)] The function $\delta_{K/K_0,\psi}$ is a density if and only if $\psi$ takes values only in the real interval $[0,[K:K_0]]$ and $\langle \psi, {\bf 1} \rangle_{K/K_0} = 1$, where ${\bf 1}$ denotes the trivial character of $\Gal_{K/K_0}$.
\end{itemize}
\end{lm}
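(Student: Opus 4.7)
The plan is to reduce both parts to the properties of $\delta_{K/K_0,x}$ already established in Remarks \ref{rem:x-density}(i), combined with bilinearity of the inner product $\langle \cdot, \cdot \rangle_{K/K_0}$ and an evaluation of $\delta_{K/K_0,\psi}$ on two concrete test sets.

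For (i), I would evaluate everything pointwise on $\Gal_{K/K_0}$: Remarks \ref{rem:x-density}(i), applied at a fixed $x \in \Gal_{K/K_0}$ to the hypothesis that $S, T$ have an $x$-density and one of $S \cap T, S \cup T$ does, yields that the other also does and that $\chi_{K/K_0, S}(x) + \chi_{K/K_0, T}(x) = \chi_{K/K_0, S \cap T}(x) + \chi_{K/K_0, S \cup T}(x)$. Running $x$ over the whole Galois group transfers mesurability in the sense of the preceding definition, and the claimed identity is then immediate from linearity of $\langle \psi, \cdot \rangle_{K/K_0}$ in its second slot.

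For (ii) the key step is to evaluate $\delta_{K/K_0,\psi}$ on two specific mesurable sets. Taking $S = \Sigma_K$ gives $\chi_{K/K_0, \Sigma_K} \equiv 1$, hence $\delta_{K/K_0,\psi}(\Sigma_K) = \langle \psi, {\bf 1} \rangle_{K/K_0}$. Taking $S = P_{K/K_0}^x$ for some $x \in \Gal_{K/K_0}$ and using that the sets $P_{K/K_0}^y$ ($y \in \Gal_{K/K_0}$) are pairwise disjoint (each unramified prime has a unique Frobenius), the characteristic function $\chi_{K/K_0, P_{K/K_0}^x}$ is the indicator of $\{x\}$, so $\delta_{K/K_0,\psi}(P_{K/K_0}^x) = \psi(x)/[K:K_0]$. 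The ``only if'' direction of (ii) is then immediate: the first computation forces $\langle \psi, {\bf 1} \rangle_{K/K_0} = 1$ and the second forces $\psi(x) \in [0, [K:K_0]]$ (in particular $\psi(x) \in \bR$) for every $x$.

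The ``if'' direction is a short estimate. Given that $\psi$ takes values in $[0, [K:K_0]]$ and satisfies $\langle \psi, {\bf 1} \rangle_{K/K_0} = 1$, the inequalities $0 \leq \chi_{K/K_0, S}(x) \leq 1$ (which hold for every mesurable $S$ and every $x$ by Remarks \ref{rem:x-density}(i)) multiplied by the nonnegative $\psi(x)$ and summed over $x \in \Gal_{K/K_0}$ give $0 \leq \delta_{K/K_0, \psi}(S) \leq \langle \psi, {\bf 1} \rangle_{K/K_0} = 1$, as required. No serious obstacle appears; the only point that deserves care is that the upper bound $1$ in the ``if'' direction really uses $\psi \geq 0$ and not just $\lvert \psi \rvert \leq [K:K_0]$.
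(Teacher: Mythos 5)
Your argument is correct and follows the same route the paper indicates: part (i) is Remark 2.3(i) applied pointwise plus (conjugate-)linearity of the inner product in its second slot (harmless here since $\chi_{K/K_0,S}$ is real-valued), and part (ii) is the "immediate computation" the paper leaves implicit, which you carry out correctly by testing $\delta_{K/K_0,\psi}$ on the mesurable sets $\Sigma_K$ and $P_{K/K_0}^x$. Your closing caveat that the upper bound in the "if" direction genuinely needs $\psi\geq 0$ is well taken.
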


\begin{proof} (i) follows from bilinearity of $\langle \cdot,\cdot\rangle_{K/K_0}$ and Remark \ref{rem:x-density} and (ii) is an immediate computation. \end{proof}

\begin{rem}
In particular, the Dirichlet density $\delta_K$ corresponds to the character of the regular representation of $\Gal_{K/K_0}$ and $\delta_{K/K_0,x}$ for $x \in \Gal_{K/K_0}$ corresponds to the function defined by $\psi(y) = [K:K_0]\delta_{xy}$, where $\delta_{xy}$ is the Kronecker symbol.
\end{rem}

The next proposition shows that if $L/K$ is a finite extension then $\chi_{L,S_L}$ is in a sense the induction of $\chi_{K,S}$ to $L$:

\begin{prop} 
Let $L/K/K_0$ be finite Galois extensions. Denote by $\pi : \Gal_{L/K_0} \tar \Gal_{K/K_0}$ the natural projection. Then for all $\psi \in \Gal_{K/K_0}(\bC)$ and all mesurable $S$ we have
\[ \langle \inf\nolimits_{\Gal_{K/K_0}}^{\Gal_{L/K_0}} \psi, \chi_{L,S_L} \rangle_L = \langle \psi, \chi_{K,S} \rangle_K \]
\noindent or equivalently,
\[\delta_{L/K_0, \psi \circ \pi } (S_L) = \delta_{K/K_0,\psi}(S). \]
\end{prop}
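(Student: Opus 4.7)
The two displayed identities are equivalent: unwinding Definition~\ref{def:psi-mes} gives $\langle \inf\nolimits_{\Gal_{K/K_0}}^{\Gal_{L/K_0}}\psi,\chi_{L,S_L}\rangle_L = \delta_{L/K_0,\psi\circ\pi}(S_L)$ and $\langle \psi,\chi_{K,S}\rangle_K = \delta_{K/K_0,\psi}(S)$, so I would work with the inner product form. Writing $G=\Gal_{L/K_0}$, $H=\Gal_{L/K}$, the goal becomes
\[ |G|^{-1}\sum_{y\in G}\psi(\pi(y))\,\chi_{L,S_L}(y) \;=\; |G/H|^{-1}\sum_{x\in G/H}\psi(x)\,\chi_{K,S}(x). \]

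The main step is to split the $L$-sum along the fibers of $\pi$ and transfer each $L$-side density to the $K$-side by Corollary~\ref{cor:bc_of_delta_x}. For fixed $x\in G/H$ and $y\in\pi^{-1}(x)$ with $H$-conjugacy class $C_y\subseteq\pi^{-1}(x)$, that corollary yields
\[ \chi_{L,S_L}(y) \;=\; \delta_{L/K_0,y}(S_L) \;=\; \frac{|H|}{|C_y|}\,\delta_{K/K_0,x}(S \cap M_{C_y}). \]
Grouping $y\in\pi^{-1}(x)$ by $H$-conjugacy class (a single class $C$ contributing $|C|$ equal summands) and invoking the disjoint decomposition $P_{K/K_0}^x = (\Ram(L/K)\cap P_{K/K_0}^x)\sqcup\bigsqcup_C M_C$ from the beginning of Section~\ref{sec:pull-back-properties}, together with Remark~\ref{rem:x-density}(i) to discard the finite ramified piece, I expect
\[ \sum_{y\in\pi^{-1}(x)}\chi_{L,S_L}(y) \;=\; |H|\sum_C \delta_{K/K_0,x}(S\cap M_C) \;=\; |H|\,\chi_{K,S}(x). \]
Since $\psi\circ\pi$ is constant on each fiber $\pi^{-1}(x)$, multiplying by $\psi(x)$, summing over $x\in G/H$, and dividing by $|G|=|H|\cdot|G/H|$ then closes the argument.

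The only non-formal point I foresee is a small measurability bookkeeping: Corollary~\ref{cor:bc_of_delta_x} demands existence of $\delta_{K/K_0,x}(S \cap M_C)$ for each $C$, while the hypothesis ``$S$ mesurable'' only directly provides existence of $\delta_{K/K_0,x}(S)$ (and writing $\chi_{L,S_L}$ implicitly assumes existence of $\delta_{L/K_0,y}(S_L)$). I would handle this by re-running the change of variables $t=(e/d)s$ used inside the proof of Corollary~\ref{cor:bc_of_delta_x}: it converts the ratio defining $\delta_{L/K_0,y}(S_L)$ into that defining $\delta_{K/K_0,x}(S\cap M_{C_y})$ (with a bounded remainder coming from primes of $L$ ramifying over $K$, which is absorbed as $s\to d^{-1}+0$), so existence of the former immediately gives existence of the latter, and the rest of the computation is purely formal.
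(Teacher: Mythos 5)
Your proposal is correct and follows essentially the same route as the paper: split the sum over $G$ along the fibers of $\pi$, apply Corollary \ref{cor:bc_of_delta_x} to each $y$, group by $H$-conjugacy classes, and sum the densities $\delta_{K/K_0,x}(S\cap M_C)$ over the disjoint decomposition of $P_{K/K_0}^x$ to recover $\delta_{K/K_0,x}(S)$. Your extra paragraph on the measurability bookkeeping makes explicit a point the paper's proof passes over silently, but the argument is the same.
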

\begin{proof}
Let $G := \Gal_{L/K_0}$, $H := \Gal_{L/K}$. For $y \in G$, let $C(y) \subseteq \pi^{-1}(\pi(y))$ denote its $H$-conjugacy class. Then (we write $\delta_{\ast,\psi}$ instead of $\delta_{\ast/K_0,\psi}$):
\begin{eqnarray*}
\langle \psi \circ \pi, \chi_{L,S_L} \rangle_L &=& \frac{1}{\abs{G}} \sum_{y \in G} \psi(\pi(y))\delta_{L,y}(S_L) \\ 
&=& \frac{1}{\abs{G}} \sum_{y \in G} \psi(\pi(y)) \frac{\abs{H}}{\abs{C(y)}} \delta_{K,x}(S \cap M_{C(y)}) \\ 
&=& \frac{1}{\abs{(G/H)}} \sum_{x \in G/H} \psi(x) \sum_{C \subseteq \pi^{-1}(x)} \sum_{y \in C} \frac{1}{\abs{C}} \delta_{K,x}(S \cap M_C) \\
&=& \frac{1}{\abs{(G/H)}} \sum_{x \in G/H} \psi(x) \sum_{C \subseteq \pi^{-1}(x)} \delta_{K,x}(S \cap M_C) \\
&=& \frac{1}{\abs{(G/H)}} \sum_{x \in G/H} \psi(x) \delta_{K,x}(S) = \langle \psi, \chi_{K,S} \rangle_K. 
\end{eqnarray*}
\noindent where the second equality follows from Corollary \ref{cor:bc_of_delta_x}.
\end{proof}


\section{Realization of local extensions} \label{sec:proof_of_thm}

\subsection{Complements on stable sets}\label{sec:complem_stable_sets}

Before starting with the proof of Theorem \ref{thm:real_of_loc_ext}, we recall for the convenience of the reader some definitions and results from \cite{IvStableSets}.

\begin{Def}[part of \cite{IvStableSets} Definitions 2.4, 2.7] Let $S$ be a set of primes of $K$ and $\cL/K$ any (algebraic) extension. 
\begin{itemize}
\item[(i)] Let $\lambda > 1$. A finite subextension $\cL/L_0/K$ is $\lambda$-stabilizing for $S$ for $\cL/K$, if there exists a subset $S_0 \subseteq S$ and some $a \in (0,1]$, such that $\lambda a > \delta_L(S_0) \geq a > 0$ for all finite subextensions $\cL/L/L_0$. We say that $S$ is \textbf{$\lambda$-stable}, if it has a $\lambda$-stabilizing extension for $\cL/K$. We say that $S$ is \textbf{stable for $\cL/K$}, if it is $\lambda$-stable for $\cL/K$ for some $\lambda > 1$. We say that $S$ is ($\lambda$-)\textbf{stable}, if it is ($\lambda$-)stable for $K_S/K$. 

\item[(ii)] We say that $S$ is \textbf{persistent} for $\cL/K$ (with persisting field $L_0$, lying between $\cL/K$) if the density of a subset $S_0 \subseteq S$ gets constant in the tower $\cL/L_0$.

\item[(iii)] Let $p$ be a rational prime. We say that $(S, \cL/K)$ satisfies $(\dagger)_p^{\rm rel}$, if $\mu_p \subseteq \cL$ and $S$ is $p$-stable for $\cL/K$ or $\mu_p \not\subseteq \cL$ and $S$ is stable for $\cL(\mu_p)/K$. We say that $S$ satisfies $(\dagger)_p$, if $(S, K_S/K)$ satisfies $(\dagger)_p^{\rm rel}$.
\end{itemize}  
\end{Def}

We will need the following crucial result about stable sets, which we take from \cite{IvStableSets}. 

\begin{thm}[\cite{IvStableSets} Theorem 5.9] \label{thm:dirlim_GW_coker_vanishes_citloc} Let $K$ be a number field, $S$ a set of primes of $K$ and $\cL \subseteq K_S$ a subextension normal over $K$, such that $(S, \cL)$ satisfies $(\dagger)_p^{\rm rel}$. Let $T$ be a finite set of primes of $K$ containing $(S_p \cup S_{\infty}) \sm S$. If $p^{\infty} | [\cL:K]$, then
\[ \dirlim_{\cL/L/K, \res} \coker^1(K_{S \cup T}/L, T, \bZ/p\bZ) = 0. \]
\end{thm}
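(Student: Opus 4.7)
The plan is to recast the vanishing via Poitou--Tate duality as a Kummer-theoretic Selmer statement, and then to exploit the stability hypothesis $(\dagger)_p^{\rm rel}$ through Chebotarev in order to force Selmer classes to split in the tower $\cL/K$.

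First I would apply Poitou--Tate duality to the finite module $\bZ/p\bZ$ over the group $G_{S \cup T}(L)$. Since $T \supseteq (S_p \cup S_\infty) \sm S$, the union $S \cup T$ contains all primes above $p$ and all archimedean places, so the usual nine-term exact sequence applies. This identifies $\coker^1(K_{S \cup T}/L, T, \bZ/p\bZ)$ with the Pontryagin dual of
\[ \Sha_S^1(L) := \ker\Bigl( \coh^1(G_{S \cup T}(L), \mu_p) \to \prod_{\fp \in S(L)} \coh^1(L_\fp, \mu_p) \Bigr). \]
The direct limit appearing in the statement then corresponds to an inverse limit of the finite groups $\Sha_S^1(L)$, so it suffices to show that every class in $\Sha_S^1(L_0)$ restricts to zero at some sufficiently large finite $L \subseteq \cL$.

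Second, I would reduce to the case $\mu_p \subseteq \cL$. When $\mu_p \not\subseteq \cL$, the definition of $(\dagger)_p^{\rm rel}$ demands stability for the larger tower $\cL(\mu_p)/K$ instead of $\cL/K$, and an inflation--restriction argument using $\gcd([\cL(\mu_p):\cL],p)=1$ transfers the vanishing from $\cL(\mu_p)$ back to $\cL$. Once $\mu_p \subseteq L$, Kummer theory yields
\[ \coh^1(G_{S \cup T}(L), \mu_p) \cong V_{S \cup T}(L) := \{ \bar a \in L^\times/(L^\times)^p : p \mid v_\fp(a) \text{ for all } \fp \notin S \cup T \}, \]
and an element of $\Sha_S^1(L)$ corresponds to $\bar a \in V_{S \cup T}(L)$ such that $a \in (L_\fp^\times)^p$ for every $\fp \in S(L)$. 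Equivalently, writing $F := L(a^{1/p})$, the extension $F/L$ is of degree $1$ or $p$, unramified outside $S \cup T$, and every $\fp \in S(L)$ is completely split in $F/L$.

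Third, stability enters as the quantitative engine. By $p$-stability (or stability for $\cL(\mu_p)/K$ after the reduction) there is a subset $S_0 \subseteq S$ and some $a > 0$ with $\delta_{L'}(S_0(L')) \geq a$ uniformly for all finite $L_0 \subseteq L' \subseteq \cL$. Chebotarev gives $\delta_{L'}(\cs(FL'/L')) = 1/[FL':L']$. If the class persists in $\Sha_S^1$ at every level $L'$, the inclusion $S_0(L') \subseteq \cs(FL'/L')$ (up to a density-zero set) forces $[FL':L'] \leq 1/a$ independently of $L'$. Since $p^\infty \mid [\cL:K]$ and $F/L$ is a $p$-extension, some $L' \subseteq \cL$ contains $F$; then $a$ becomes a $p$-th power in $L'$ and the class dies.

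The main obstacle I expect is the delicate interplay in the first two steps: compatibility of Poitou--Tate duality with the direct-limit-to-inverse-limit passage (exploiting finiteness of each $\Sha_S^1(L)$ to exchange the limits), and the fact that the Kummer extension $F/L$ may be ramified at primes of $T \sm S$, so one must verify that such ramification does not spoil the split-prime count used in step three. The case $p=2$ together with real places of $K$ will also require the usual Grunwald--Wang--style care, but the freedom to enlarge the finite set $T$ should absorb these exceptional contributions.
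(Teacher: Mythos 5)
The paper does not reprove this statement---it is imported verbatim from \cite{IvStableSets}, Theorem 5.9---but the mechanism it rests on is described in the remark following the theorem and is carried out in full in Sections 5.4--5.6 of this paper for the harder case: Poitou--Tate gives a surjection $\Sha^1(K_{S\cup T}/L, S\sm T,\mu_p)^{\vee} \tar \coker^1(K_{S\cup T}/L,T;\bZ/p\bZ)$ under which the transition maps $\res$ on the cokernels correspond to $\cores^{\vee}$ on the $\Sha$-side; stability bounds $|\Sha^1|$ \emph{uniformly} in the tower, the groups therefore stabilize under restriction, and then $\cores_{L'/L}=[L':L]\cdot\res^{-1}=0$ as soon as $p\mid [L':L]$---this is the only place where $p^{\infty}\mid[\cL:K]$ enters. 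Your first two steps follow this outline, but step three has two genuine gaps. First, you dualize in the wrong direction: to kill $\dirlim(\coker^1,\res)$ you must show that the \emph{corestriction} $\Sha^1(L')\to\Sha^1(L)$ is eventually zero, not that an individual class of $\Sha^1(L)$ ``restricts to zero'' upstairs; killing restricted classes says nothing about $\cores$ on the possibly new classes appearing at level $L'$. Second, the assertion ``since $p^{\infty}\mid[\cL:K]$ and $F/L$ is a $p$-extension, some $L'\subseteq\cL$ contains $F$'' is unjustified and false in general: the Kummer extension $F=L(a^{1/p})$ attached to a $\Sha$-class has no reason to lie inside the prescribed tower $\cL$, and your density comparison only yields $[FL':L']\leq 1/a$, which is no contradiction when $a\leq 1/p$ (stability provides \emph{some} uniform $a>0$ pinned between $a$ and $\lambda a$, not $a>1/p$). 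So the single-class argument does not close.

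The correct use of the density estimate is collective rather than classwise: the compositum Kummer extension $N_L/L(\mu_p)$ cut out by the whole group $\Sha^1(K_{S\cup T}/L,S\sm T,\mu_p)$ is completely split at $S\sm T$, so Chebotarev gives $\delta_{L(\mu_p)}(S_0)\leq 1/[N_L:L(\mu_p)]=1/|\Sha^1|$, and the uniform lower bound $\delta_{L(\mu_p)}(S_0)\geq a$ coming from $(\dagger)_p^{\rm rel}$ yields $|\Sha^1|\leq 1/a$ for every finite $L$ in the tower; note that $N_L$ need not lie in $\cL$ for this comparison, which is exactly what your version was missing. Combined with the (controlled) injectivity of restriction and $\cores\circ\res=[L':L]$, this gives the vanishing of the direct limit. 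Two smaller inaccuracies: the local triviality conditions in the relevant $\Sha^1$ are imposed at $S\sm T$, not at all of $S$ (your group is a priori a proper subgroup of the one that actually surjects onto the cokernel), and Poitou--Tate gives only a surjection onto $\coker^1$, not an identification---both are harmless but should be stated correctly.
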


\begin{rem} Many results (e.g., such as the one quoted above, but also finite cohomological dimension, etc.) holding for sets with Dirichlet density one also hold (with respect to a prime $p$) for stable sets of primes (satisfying $(\dagger)_p$). The proofs in the case of sets with density one rely heavily on the fact that various Tate-Shafarevich groups of $\Gal_{K,S}$ with finite resp. divisible coefficients vanish. This is in general not true for stable sets and the reason why many proofs (in particular, the proof of Theorem \ref{thm:dirlim_GW_coker_vanishes_citloc}) still work, is that one can, using stability conditions, bound the size of Tate-Shafarevich groups, which in turn implies the vanishing of them in the limit taken over all finite subextensions of certain (infinite) subextensions $K_S/\cL/K$. 
\end{rem}

By easy density computations we obtain:

\begin{lm}[\cite{IvStableSets} Proposition 3.3, Corollary 3.4]\label{lm:ACS_persistent}
Let $M/K$ be a finite Galois extension and $\sigma \in \Gal_{M/K}$.

\begin{itemize}
\item[(i)] Let $L/K$ be any finite extension. Let $L_0 := L \cap M$. Then:
\[
\delta_L(P_{M/K}(\sigma)_L) = \frac{ \abs{ C(\sigma; \Gal_{M/K}) \cap \Gal_{M/L_0} } }{ \abs{\Gal_{M/L_0}} }.
\]
\item[(ii)] Let $S \backsimeq P_{M/K}(\sigma)$. Let $\cL/K$ be any extension. Then $S$ is persistent for $\cL/K$ with persisting field $L_0$ if and only if
\[ \Gal_{M/M \cap \cL} \cap C(\sigma; \Gal_{M/K}) \neq \emptyset, \]
where $C(\sigma;\Gal_{M/K})$ denotes the conjugacy class of $\sigma$ in $\Gal_{M/K}$. 
\end{itemize}
\end{lm}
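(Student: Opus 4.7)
The plan is to derive (i) from Chebotarev's density theorem applied to the finite Galois extension $ML/L$ and then to deduce (ii) from (i) by a tower argument. For (i), I would set up the standard restriction isomorphism
\[ \iota \colon \Gal_{ML/L} \stackrel{\sim}{\longrightarrow} \Gal_{M/L_0}, \]
available since $M/K$ is Galois. The idea is to translate the condition $\fp|_K \in P_{M/K}(\sigma)$ (for a prime $\fp$ of $L$) into a Frobenius condition inside $\Gal_{ML/L}$, to which classical Chebotarev can be applied.

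For an unramified prime $\fp$ of $L$ with $\fp_0 := \fp|_K$, pick $\fP$ a prime of $ML$ above $\fp$. Comparing residue field sizes gives $\Frob_{\fP, ML/L} = \Frob_{\fP, ML/K}^{f(\fp/\fp_0)}$, and hence, after restriction to $M$,
\[ \iota(\Frob_{\fP, ML/L}) = \Frob_{\fP|_M, M/K}^{f(\fp/\fp_0)} \in \Gal_{M/L_0} \subseteq \Gal_{M/K}. \]
Since Dirichlet density is detected by primes of absolute residue degree $1$ over $\bQ$, for which $f(\fp/\fp_0) = 1$, up to a set of density zero the set $P_{M/K}(\sigma)_L$ agrees with the set of unramified primes $\fp$ of $L$ with $\iota(\Frob_\fP) \in T$, where
\[ T := C(\sigma;\Gal_{M/K}) \cap \Gal_{M/L_0}. \]
An elementary check shows that $T$ is stable under $\Gal_{M/L_0}$-conjugation: a $\Gal_{M/L_0}$-conjugate of an element of $T$ stays in $\Gal_{M/L_0}$ and in $C(\sigma;\Gal_{M/K})$, as conjugation takes place inside $\Gal_{M/K}$. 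Chebotarev applied to $ML/L$ then yields $\delta_L(P_{M/K}(\sigma)_L) = |T|/|\Gal_{M/L_0}|$, which is (i).

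For (ii), the density computed in (i) depends on $L$ only through $L \cap M$. As $L$ ranges over finite subextensions of $\cL/K$, the intersection $L \cap M$ is a subfield of $\cL \cap M$, which is itself finite over $K$ since $M/K$ is. For every $L \supseteq \cL \cap M$ we therefore have $L \cap M = \cL \cap M$, so the density stabilizes to $|C(\sigma;\Gal_{M/K}) \cap \Gal_{M/\cL \cap M}|/|\Gal_{M/\cL \cap M}|$. Because $S \backsimeq P_{M/K}(\sigma)$ forces $\delta_L(S_L) = \delta_L(P_{M/K}(\sigma)_L)$ for every $L$, the set $S$ is persistent for $\cL/K$ (with persisting field any $L_0 \supseteq \cL \cap M$) if and only if this stable value is positive, which is equivalent to $C(\sigma;\Gal_{M/K}) \cap \Gal_{M/\cL \cap M} \neq \emptyset$.

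The main point to handle with care is the identification of Frobenius elements across the groups $\Gal_{ML/L}$, $\Gal_{ML/K}$ and $\Gal_{M/K}$ together with the factor $f(\fp/\fp_0)$; restricting to primes of absolute degree $1$ over $\bQ$ and exploiting the $\Gal_{M/L_0}$-conjugation stability of $T$ circumvents this cleanly.
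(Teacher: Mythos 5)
Your argument is correct and is essentially the standard one behind the cited result (\cite{IvStableSets} Prop.\ 3.3, Cor.\ 3.4), which this paper quotes without proof: apply Chebotarev to $ML/L$ via the restriction isomorphism $\Gal_{ML/L}\cong\Gal_{M/L_0}$, handle the Frobenius-power discrepancy by restricting to absolute degree-one primes, and note that $C(\sigma;\Gal_{M/K})\cap\Gal_{M/L_0}$ is a union of $\Gal_{M/L_0}$-classes; part (ii) then follows since the density depends only on $L\cap M$, which stabilizes at $\cL\cap M$. The only cosmetic slip is writing $\Frob_{\fP,ML/K}$ (which presumes $ML/K$ Galois); what you actually use is the comparison with $\Frob_{\fP|_M,M/K}$, which is fine.
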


From now on and until the end of the paper we prove Theorem \ref{thm:real_of_loc_ext}. We let $M/K,\sigma,R \subseteq S$ and $\ell \leq \infty$ be as in the theorem.


\subsection{Some reduction steps} \label{sec:red_steps}

Clearly, we can assume $\ell < \infty$. For any finite subextension $K_S^R/L/K$, any finite set $T$ of primes of $L$ and any rational prime $p$ consider the cokernel 

\[ \coh^1(K_{S \cup T}/L, \bZ/p\bZ) \rar \prod_T \coh^1(\overline{K_{\fp}}/L_{\fp}, \bZ/p\bZ) \tar \coker^1(K_{S \cup T}/L,T;\bZ/p\bZ) \]

\noindent of the restriction map. Theorem \ref{thm:real_of_loc_ext} for $K_S^R(\fc_{\leq \ell})/K$ follows easily from Claim \ref{claim:claim1} below for all $p \leq \ell$ (cf. \cite{NSW} 9.2.7, 9.4.3).

\begin{claim}\label{claim:claim1} For all $T \supseteq R \cup S_p \cup S_{\infty}$, we have

\[ \dirlim_L \coker^1(K_{S \cup T}/L, T; \bZ/p\bZ) = 0, \]

\noindent where the limit is taken over all finite subextensions $L$ of $K_S^R(\fc_{\leq \ell})/K$.
\end{claim}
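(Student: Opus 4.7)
The plan is to invoke (or, in the hard case, suitably adapt) Theorem~\ref{thm:dirlim_GW_coker_vanishes_citloc} applied to the tower $\cL := K_S^R(\fc_{\leq \ell})$. Because $\cL/K$ is Galois, lies in $K_S$, and its finite subextensions are exactly the indexing set of the direct limit in Claim~\ref{claim:claim1}, it suffices to verify (a) $p^{\infty}\mid [\cL:K]$ and (b) the cohomological vanishing statement of Theorem~\ref{thm:dirlim_GW_coker_vanishes_citloc} for $(S,\cL/K)$. Divisibility (a) follows from $S\supsetsim P_{M/K}(\sigma)$: Chebotarev supplies infinitely many primes of positive Dirichlet density in $S$, which by standard class field theory forces the pro-$p$ subfield $K_S^R(p)\subseteq \cL$ to have unbounded pro-$p$-degree (here we use $p\leq\ell$, so $\bZ/p\bZ\in\fc_{\leq \ell}$).

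For (b) I distinguish two regimes. If $\delta_{K(\mu_p)}(S_{K(\mu_p)})>0$ --- automatic when $\mu_p\subseteq K$, and in general, by Lemma~\ref{lm:ACS_persistent}(i), equivalent (for $S\backsimeq P_{M/K}(\sigma)$) to the image of $\sigma$ in the abelian group $\Gal_{(K(\mu_p)\cap M)/K}$ being trivial --- then Lemma~\ref{lm:ACS_persistent}(ii) shows that $S$ is persistent, hence $\lambda$-stable, for $\cL(\mu_p)/K$. Thus $(S,\cL/K)$ satisfies $(\dagger)_p^{\rm rel}$ and Theorem~\ref{thm:dirlim_GW_coker_vanishes_citloc} applies verbatim.

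The delicate regime is $\delta_{K(\mu_p)}(S_{K(\mu_p)}) = 0$. Here $(\dagger)_p^{\rm rel}$ fails for every subextension containing $K(\mu_p)$, so Theorem~\ref{thm:dirlim_GW_coker_vanishes_citloc} cannot be quoted directly. The decisive structural observation is that every prime divisor of $[K(\mu_p):K]$ is $\leq p-1<\ell$ and hence belongs to $\fc_{\leq \ell}$, so $K_S^R(\fc_{\leq \ell})$ still has room for non-trivial pro-$q$-extensions for each $q\mid [K(\mu_p):K]$. I would pass first to a finite subextension $K'/K$ inside $K_S^R(\fc_{\leq \ell})$, built from such pro-$(\fc_{\leq \ell}\sm\{p\})$-extensions --- of the kind illustrated in the introductory example for $\bQ,\,\ell=3$ --- large enough that $P_{M/K}(\sigma)_{K'}\cap\cs(K'(\mu_p)/K')$ becomes non-empty, and in fact infinite. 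By Corollary~\ref{cor:counting_inf_dens_of_Chebotarevs} this can be arranged so that the intersection has positive $x$-density in $K'(\mu_p)/K'$ for a suitable $x\in\Gal_{K'(\mu_p)/K'}$. This positive $x$-density is the substitute for Dirichlet stability; one then reruns the Tate--Shafarevich-bounding argument underlying Theorem~\ref{thm:dirlim_GW_coker_vanishes_citloc} over the tower $\cL/K'$ with Dirichlet density systematically replaced by the $x$-density, using Proposition~\ref{prop:verallgCheb} and Corollary~\ref{cor:bc_of_delta_x} to control base-change behavior as one ascends.

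The hard regime is where the essential work lies. Tracking how the chosen Frobenius element $x$ evolves under base change --- through $H$-conjugacy classes inside $\pi^{-1}(x)$ in the notation of Section~\ref{sec:pull-back-properties} --- and ensuring that the pro-$\fc_{\leq \ell}$ constraint at no stage cuts off the intermediate fields needed to bound the Tate--Shafarevich groups is the core technical difficulty. This bookkeeping is precisely the reason the theorem is proved for $\fc_{\leq \ell}$ rather than in the pro-$p$ setting, where, as the introduction points out, it genuinely fails.
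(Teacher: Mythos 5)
Your treatment of the easy regime matches the paper's Step 1: when $(S,K_S^R(\fc_{\leq\ell})/K)$ satisfies $(\dagger)_p^{\rm rel}$, one quotes Theorem \ref{thm:dirlim_GW_coker_vanishes_citloc} and is done. (One caveat: your justification of $p^{\infty}\mid[\cL:K]$ via ``positive Dirichlet density of $S$ plus class field theory'' is not valid in general --- the paper's own example $\bQ_S(p)=\bQ$ for $S=P_{\bQ(\mu_p)/\bQ}(\sigma)$, $\sigma\neq 1$, shows that a set of positive density can support no $p$-ramification at all; in the easy regime the divisibility instead comes from the realization results for sets satisfying $(\dagger)_p$.) The genuine gap is the hard regime $\delta_{K(\mu_p)}(S_{K(\mu_p)})=0$: what you offer there is a restatement of the strategy announced in the introduction (``replace Dirichlet density by an $x$-density and rerun the stability argument''), not a proof, and the paper explicitly warns that the argument in this case is \emph{not} a routine transposition of the proof of Theorem \ref{thm:dirlim_GW_coker_vanishes_citloc}.

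Concretely, the following ingredients are missing and none of them is automatic. First, an induction on the prime $p$: the claim for the primes $q<p$ dividing $d:=[K(\mu_p):K]$ is used to realize a cyclic degree-$d$ extension $K_0/K$ inside $K_S^R(\fc_{\leq\ell})$ with prescribed local behaviour; this is how one manufactures primes $\fp\in S$ with $\mu_p\subseteq K_{0,\fp}$. Second, the construction of a specific infinite tower $K_0\subset K_1\subset\cdots$ with $\Gal_{K_i/K}\cong H_0\ltimes(\bZ/p\bZ)^i$, obtained from an infinite $\chi$-isotypic component of $\coh^1(K_{0,S'}/K_0,\bZ/p\bZ)$ for a character $\chi$ of $H_0$ of order $\geq\ord(\bar{\sigma})$; the existence of such a $\chi$ requires an argument (finiteness of the components for characters of small order, via \cite{NSW} 10.7.3). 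Third, the reduction of \eqref{eq:dirlimvan} to a \emph{uniform bound} on the isotypic components $\Sha^1_i(\psi)$, split into two cases: for $\ord(\psi)<\ord(\bar{\sigma})$ a purely group-theoretic normal-closure computation in the semidirect product confines $\Sha^1_i(\psi)$ to $\coh^1(K_{S\cup T}/K_0,\mu_p)$, while for $\ord(\psi)\geq\ord(\bar{\sigma})$ the bound comes from the density machinery. Fourth --- and this is the heart of the matter --- the density argument is not a ``systematic replacement'' of Dirichlet density: one chooses $y=(\bar{\sigma},\lambda_i(x))$ with $\psi(x)=\bar{\sigma}$, uses the prime-to-$p$ order of $y$ to split $1\to\Sha^1_i(\psi)^{\vee}\to G_y\to\langle y\rangle\to 1$ and exhibit a \emph{unique} order-preserving lift $\tilde{y}$, shows that complete splitness of $L_i(\psi)/K_i(\mu_p)$ in $S\sm T$ forces $(S_0\sm T)\cap P^y_{K_i(\mu_p)/K}\subseteq M_{\tilde{y}}$, and then combines $\delta_{K_i(\mu_p),y}(M_{\tilde{y}})=1/\abs{\Sha^1_i(\psi)}$ (Proposition \ref{prop:verallgCheb}) with the $i$-independent lower bound $\delta_{K_i(\mu_p),(\bar{\sigma},x)}(S_0)>C^{-1}$ to conclude $\abs{\Sha^1_i(\psi)}<C$. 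Without these steps the ``bookkeeping'' you defer to is precisely the proof, so the proposal as written does not establish the claim.
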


\begin{lm} \label{lm:red_to_persistent}
There are two finite sets $R_1,R_2$ of primes of $K$ with $R_1 \cap R_2 = R$ and such that $M \cap K_S^{R_j} = K$, i.e., $P_{M/K}(\sigma)$ (and hence also $S$) is persistent for $K_S^{R_j}/K$ with persisting field $K$ for $i= 1,2$.
\end{lm}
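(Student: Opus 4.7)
The plan is to translate the condition $M \cap K_S^{R_j} = K$ into a finite list of obstructions and then fulfill them with Chebotarev. Concretely, $M \cap K_S^{R_j}$ is the largest subfield $F$ with $K \subseteq F \subseteq M$ which is both unramified outside $S$ and completely split at every prime of $R_j$, so I need to arrange that no non-trivial such $F$ exists. Since $M/K$ is finite Galois there are only finitely many intermediate fields, so this is a finite list of conditions to kill.

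First I enumerate the finite collection $\caF$ of non-trivial subextensions $K \subsetneq F \subseteq M$ which happen to be unramified outside $S$; subfields with ramification outside $S$ need no attention, since they are automatically not contained in any $K_S^{R_j}$. For each $F \in \caF$, Chebotarev's density theorem applied to $M/K$ furnishes infinitely many primes of $K$ whose Frobenius class in $\Gal_{M/K}$ avoids $\Gal_{M/F}$, i.e., primes not completely split in $F$. I pick, for each $F \in \caF$ and each $j \in \{1,2\}$, such a prime $\fp_F^{(j)}$, making all of these $2|\caF|$ primes pairwise distinct and disjoint from the finite set $R$. Then I define $R_j := R \cup \{\fp_F^{(j)} : F \in \caF\}$, so that $R_1 \cap R_2 = R$ by construction.

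With this choice, for any non-trivial $F \subseteq M$: either $F$ has ramification outside $S$ and hence $F \not\subseteq K_S^{R_j}$, or $F \in \caF$ and then the prime $\fp_F^{(j)} \in R_j$ is not completely split in $F$, so again $F \not\subseteq K_S^{R_j}$. This gives $M \cap K_S^{R_j} = K$. The persistence clause is then immediate from Lemma \ref{lm:ACS_persistent}(ii): $\Gal_{M/M \cap K_S^{R_j}} = \Gal_{M/K}$ trivially meets the conjugacy class $C(\sigma, \Gal_{M/K})$.

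The closest thing to an obstacle here is purely bookkeeping: one must keep the auxiliary primes pairwise distinct across the two sets so that the intersection comes out exactly $R$ rather than slightly larger. No deeper input than classical Chebotarev is required, which fits the role of this lemma as the opening reduction step of Section \ref{sec:red_steps}.
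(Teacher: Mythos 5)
Your argument is correct and takes essentially the same route as the paper: both add to $R$ finitely many auxiliary primes, chosen by Chebotarev, whose splitting behaviour obstructs every non-trivial subextension of $M/K$ from lying in $K_S^{R_j}$, and then invoke Lemma \ref{lm:ACS_persistent}. The only difference is bookkeeping — the paper selects one prime for each element of a generating set of $\Gal_{M/K}$ (with Frobenius class $C(g_k)$) and notes that $M \cap K_S^{R_j}$ is automatically Galois over $K$, whereas you select one prime per intermediate field not completely split there; both are valid.
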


\begin{proof}
Indeed, choose a set of generators $g_1,\dots, g_r$ of $\Gal_{M/K}$ and for $j=1,2$ primes $\fp_{j,1}, \dots, \fp_{j,r}$ of $K$ unramified in $M/K$ such that the Frobenius conjugacy class corresponding to $\fp_{j,k}$ is the conjugacy class of $g_k$ and such that the sets $\{ \fp_{j,k} \colon k = 1,\dots,r \} \sm R$ are disjoint for $j=1,2$ (this is possible by Chebotarev). Let $R_j := \{\fp_{j,k} \colon k = 1, \dots, r \} \cup R$. Then any non-trivial (Galois) subextension of $M/K$ is not completely split in at least one prime $\fp \in R_j$. Hence $M \cap K_S^{R_j} = K$ and hence by Lemma \ref{lm:ACS_persistent}, $P_{M/K}(\sigma)$ is persistent for $K_S^{R_j}/K$ with persisting field $K$. 
\end{proof}

\emph{Step 1.} By Lemma \ref{lm:red_to_persistent} we can enlarge $R$ and hence assume that $M$ satisfies $M \cap K_S^R = K$. In particular, $P_{M/K}(\sigma)$ is  persistent for $K_S^R/K$ with persisting field $K$ by Lemma \ref{lm:ACS_persistent} (note also that the assumptions of the theorem are inherited if we replace $K$ by a finite subextension $K_S^R(\fc_{\leq \ell})/L/K$ and $S$ by $S_L$, as $P_{ML/L}(\sigma) \backsimeq P_{M/K}(\sigma)_L$ for any such $L$ and since we also have $L_S^R(\fc_{\leq \ell}) = K_S^R(\fc_{\leq \ell})$ and $ML \cap L_S^R(\fc_{\leq \ell}) = L$ (as $K_S^R(\fc_{\leq \ell}) \cap M = K$)). Now Claim \ref{claim:claim1} for all $p \leq \ell$ such that $(S,K_R^S(\fc_{\leq \ell})/K)$ satisfies $(\dagger)_p^{\rm rel}$, follows by Theorem \ref{thm:dirlim_GW_coker_vanishes_citloc} (observe, in particular, that since $P_{M/K}(\sigma)$ is persistent for $K_S^R/K$ and $\mu_2 \subseteq K$, $(\dagger)_2^{\rm rel}$ is always satisfied).

\emph{Step 2.} Thus we can assume that $(P_{M/K}(\sigma),K_R^S(\fc_{\leq \ell})/K)$ does not satisfy $(\dagger)_p^{\rm rel}$. By induction we assume that Claim \ref{claim:claim1} holds for all $p^{\prime} < p$. As the assumptions are stable under enlarging $K$ inside $K_S^R(\fc_{\leq \ell})$, it is enough to show that for each $T$ as in the claim, there is a (not necessarily finite) subextension $K_S^R(\fc_{\leq \ell})/\cL/K$, such that 
\begin{equation}\label{eq:claimseq}
\dirlim_{\cL/L/K} \coker^1(K_{S \cup T}/L, T; \bZ/p\bZ) = 0,
\end{equation}

\noindent where the limit is taken over finite subextensions of $\cL/K$. Further, since $P_{M/K}(\sigma)$ is persistent for $K_S^R(\fc_{\leq \ell})/K$, our assumption implies $\mu_p \not\subseteq K_S^R(\fc_{\leq \ell})$ and $\delta_{L(\mu_p)}(P_{M/K}(\sigma)) = 0$ for $L$ a sufficently big finite subextension of $K_S^R(\fc_{\leq \ell})/K$.
We replace $K$ by such $L$, and so we can assume that $\delta_{K(\mu_p)}(P_{M/K}(\sigma)) = 0$. 

\emph{Step 3.} We have $\mu_p \not\subseteq K_S^R(\fc_{\leq \ell})$ and after replacing $K$ by a finite subextension of $K_S^R(\fc_{\leq \ell})/K$ if necessary, we can assume that for any finite subextension $K_S^R(\fc_{\leq \ell})/L/K$, the natural map $\Gal_{L(\mu_p)/L} \rar \Gal_{K(\mu_p)/K}$ is an isomorphism. We write $\Delta := \Gal_{K(\mu_p)/K}$ and $d := \ord(\Delta)$. The group $\Delta$ can canonically be identified with a subgroup of $\bF_p^{\ast}$, an element $x \in \bF_p^{\ast}$ acting on $\zeta \in \mu_p$ by $\zeta \mapsto \zeta^x$. Note that by assumption we have $1 < d < p$.

\emph{Step 4.} We replace $M$ by $M(\mu_p)$. Therefore consider the following diagram of extensions of $K$:

\centerline{
\begin{xy}
\xymatrix{
 & M(\mu_p) \ar@{-}[ld] \ar@{-}[rd] \ar@{-}[rdd] & &  \\
M \ar@{-}[rd] & & K(\mu_p) \ar@{-}[ld] &  \\
& M \cap K(\mu_p) \ar@{-}[d] & K^{\prime} = K_S^R(\fc_{\leq \ell}) \cap M(\mu_p) \ar@{-}[ld] & \\
& K & & & \\
}
\end{xy}
} 

\noindent We have $\Gal_{M(\mu_p)/K} = \Gal_{M/K} \times_{\Gal_{M \cap K(\mu_p)/K}} \Gal_{K(\mu_p)/K}$. Let $K^{\prime} := K_S^R \cap M(\mu_p)$. Then $K^{\prime} \cap M \subseteq K_S^R(\fc_{\leq \ell}) \cap M = K$, hence $\Gal_{M(\mu_p)/K^{\prime}}$ and $\Gal_{M(\mu_p)/M}$ together generate $\Gal_{M(\mu_p)/K}$ and hence the composition  $\Gal_{M(\mu_p)/K^{\prime}} \har \Gal_{M(\mu_p)/K} \tar \Gal_{M/K}$ is surjective. Let $\sigma^{\prime}$ be a preimage of $\sigma$ inside $\Gal_{M(\mu_p)/K^{\prime}} \subseteq \Gal_{M(\mu_p)/K}$. Then $P_{M(\mu_p)/K}(\sigma^{\prime}) \subseteq P_{M/K}(\sigma) \subsetsim S$ and $P_{M(\mu_p)/K^{\prime}}(\sigma^{\prime}) \backsimeq P_{M(\mu_p)/K^{\prime}}(\sigma^{\prime}) \cap \cs(K^{\prime}/K)_{K^{\prime}} = P_{M(\mu_p)/K}(\sigma^{\prime})_{K^{\prime}}$. Hence $P_{M(\mu_p)/K^{\prime}}(\sigma^{\prime}) \subsetsim S_{K^{\prime}}$. Thus we can replace $(K,P_{M/K}(\sigma))$ by $(K^{\prime},P_{M(\mu_p)/K^{\prime}}(\sigma^{\prime}))$ and, in particular, we can assume that $\mu_p \
subseteq M$. We have 
now the following easy situation:

\begin{equation}\label{eq:red_to_M_mu_p}
\begin{gathered}
\xymatrix{
MK_S^R(\fc_{\leq \ell}) \ar@{-}[r] & K_S^R(\fc_{\leq \ell})(\mu_p) \ar@{-}[r] & K_S^R(\fc_{\leq \ell}) \\
M \ar@{-}[u] \ar@{-}[r] & K(\mu_p) \ar@{-}[u] \ar@{-}[r] & K \ar@{-}[u] 
}
\end{gathered}
\end{equation}

\noindent and the right and the outer squares are cartesian, i.e., $M \cap K_S^R(\fc_{\leq \ell}) = K$ and $K(\mu_p) \cap K_S^R(\fc_{\leq \ell}) = K$. By Lemma \ref{lm:leftcart} also the left square is cartesian, i.e., $M \cap K_S^R(\fc_{\leq \ell})(\mu_p) = K(\mu_p)$. Observe also that the situation is now stable under replacing $K, K(\mu_p), M$ by $L, L(\mu_p),ML$ for a finite subextension $K_S^R(\fc_{\leq \ell})/L/K$ and the Galois groups $\Gal_{M/K}, \Gal_{M/K(\mu_p)}, \Gal_{K(\mu_p)/K} = \Delta$ will stay unchanged under such a replacement.

\begin{lm}\label{lm:leftcart}
In the above situation we have $M \cap K_S^R(\fc_{\leq \ell})(\mu_p) = K(\mu_p)$.
\end{lm}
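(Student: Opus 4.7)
Write $F := K_S^R(\fc_{\leq \ell})$ for brevity. The inclusion $K(\mu_p) \subseteq M \cap F(\mu_p)$ is trivial, since $\mu_p \subseteq M$ (by the reduction in Step~4) gives $K(\mu_p) \subseteq M$, and certainly $K(\mu_p) \subseteq F(\mu_p)$. The content of the lemma is the reverse inclusion, and my plan is to obtain it by a degree count over $F(\mu_p)$, using the two cartesian squares already established in \eqref{eq:red_to_M_mu_p}.

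First I would record the two consequences of those cartesian squares: since $K(\mu_p)/K$ is Galois and $K(\mu_p) \cap F = K$, the natural map $\Gal_{F(\mu_p)/F} \rar \Gal_{K(\mu_p)/K} = \Delta$ is an isomorphism, so $[F(\mu_p):F] = d$; since $M/K$ is Galois and $M \cap F = K$, the natural map $\Gal_{MF/F} \rar \Gal_{M/K}$ is an isomorphism, so $[MF:F] = [M:K]$.

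Next, because $\mu_p \subseteq M$, we have $K(\mu_p) \subseteq M \subseteq MF$, hence $MF(\mu_p) = MF$. Combining this with the two degree formulas above gives
\[
[MF(\mu_p):F(\mu_p)] \;=\; \frac{[MF:F]}{[F(\mu_p):F]} \;=\; \frac{[M:K]}{d} \;=\; [M:K(\mu_p)].
\]
On the other hand, $M/K$ is Galois, hence so is $M \cdot F(\mu_p)/F(\mu_p)$, and translation of automorphisms yields
\[
[MF(\mu_p):F(\mu_p)] \;=\; [M : M \cap F(\mu_p)].
\]
Therefore $[M:M \cap F(\mu_p)] = [M:K(\mu_p)]$, and together with the already noted inclusion $K(\mu_p) \subseteq M \cap F(\mu_p)$ this forces $M \cap F(\mu_p) = K(\mu_p)$, as desired.

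There is no real obstacle here; the argument is formal Galois theory, and the only thing to watch is that the two cartesian hypotheses together with $\mu_p \subseteq M$ suffice to pin down all three relevant degrees. The step most worth double-checking is the identity $MF(\mu_p) = MF$, which is exactly what allows the two cartesian squares to be combined into a single multiplicativity statement of degrees over $F$.
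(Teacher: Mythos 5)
Your proof is correct and amounts to the same formal Galois-theoretic deduction of the left cartesian square from the other two (using $\mu_p\subseteq M$ so that $MF(\mu_p)=MF$), only phrased via degree multiplicativity and the translation theorem; the paper instead observes that in the chain of restriction maps $\Gal_{MK_S^R(\fc_{\leq \ell})/M}\rar\Gal_{K_S^R(\fc_{\leq \ell})(\mu_p)/K(\mu_p)}\rar\Gal_{K_S^R(\fc_{\leq \ell})/K}$ the second map and the composite are isomorphisms, hence so is the first. Both arguments are sound and rest on exactly the same inputs.
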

\begin{proof}
We have natural homomorphisms $\Gal_{MK_S^R(\fc_{\leq \ell})/M} \rar \Gal_{K_S^R(\fc_{\leq \ell})(\mu_p)/K(\mu_p)} \rar \Gal_{K_S^R(\fc_{\leq \ell})/K}$. The right one and the composition of both are isomorphisms. Hence also the left one is an isomorphism.
\end{proof}

Observe that $C(\sigma, \Gal_{M/K}) \cap \Gal_{M/K(\mu_p)} = \emptyset$ since $\delta_{K(\mu_p)}(P_{M/K}(\sigma)) = 0$ (cf. Lemma \ref{lm:ACS_persistent}), and hence the image $\bar{\sigma}$ of $\sigma$ in $\Delta = \Gal_{K(\mu_p)/K}$ is unequal 1.

\emph{Step 5.} Let $\fp \not\in R$ be a prime of $K$. Recall the number $1 < d < p$ from step 3. By the induction assumption in step 2, we can realize a cyclic extension of order $d$ at $\fp$ by a finite subextension of $K_S^R(\fc_{\leq \ell})/K$. More precisely, there is a finite subextension $K_S^R(\fc_{\leq \ell})/K_0/K$ such that the decomposition group $D_{\fp_1,K_0/K}$ at a prime $\fp_1$ of $K_0$ lying over $\fp$ contains a cyclic subgroup $H_0$ of order $d$. We replace $K$ by $K_0^{H_0}$ (and $P_{M/K}(\sigma)$ by $P_{MK_0^{H_0}/K_0^{H_0}}(\sigma)$) and hence can assume that $K$ has a cyclic extension $K_0$ of degree $d$ inside $K_S^R(\fc_{\leq \ell})$.


We summarize the special situation obtained by all reduction steps: we have a number field $K$, two sets of primes $S \supseteq R$ of $K$ with $R$ finite. We have further a finite extension $M/K(\mu_p)/K$ such that all squares in the diagram \eqref{eq:red_to_M_mu_p} in step 4 are cartesian, an element $\sigma \in \Gal_{M/K}$ with $P_{M/K}(\sigma) \subsetsim S$ and image $1 \neq \bar{\sigma} \in \Delta = \Gal_{K(\mu_p)/K}$. We have $d := \abs{\Delta}$ with $1 < d < p$, and there is a finite cyclic subextension $K_S^R(\fc_{\leq \ell})/K_0/K$ of degree $d$ with Galois group $H_0 := \Gal_{K_0/K}$. In this very special situation we want to show Claim \ref{claim:claim1} for $p$. As remarked in step 2, it is enough to show that for each finite set $T \supseteq R \cup S_p \cup S_{\infty}$, there is a subextension $K_S^R(\fc_{\leq \ell})/\cL/K$, such that $\eqref{eq:claimseq}$ holds. Recall that Poitou-Tate duality implies a surjection:

\[ \Sha^1(K_{S \cup T}/K, S \sm T, \mu_p)^{\vee} \tar \coker^1(K_{S \cup T}/K, T; \bZ/p\bZ) \]

\noindent (cf. \cite{NSW} 9.2.2), where the transition maps $\res$ on the right correspond to $\cores^{\vee}$ on the left. By exactness of $\dirlim$, it is enough to find a subextension $K_S^R/\cL/K$ with 
\begin{equation} \label{eq:dirlimvan}
\dirlim_{\cL/L/K, \cores^{\vee}} \Sha^1(K_{S \cup T}/L, S \sm T; \mu_p)^{\vee} = 0. 
\end{equation}

Finally remark that for any subfields $K_S^R(\fc_{\leq \ell})/L^{\prime}/L/K$ the restriction maps 
\[ \res_L^{L^{\prime}} \colon \coh^1(K_{S \cup T}/L, \mu_p) \har \coh^1(K_{S \cup T}/L^{\prime}, \mu_p), \]
\noindent are injective, as one sees from the Hochschild-Serre spectral sequence using the fact that $\mu_p$ is not trivialized by $L^{\prime}$. We can and will see these restriction maps as embeddings and identify the first group with a subgroup of the second via $\res_L^{L^{\prime}}$.


\subsection{Construction of the tower $\cL/K$.} \label{constr_of_tower}

Recall that $\bar{\sigma} \neq 1$ denotes the image of $\sigma \in \Gal_{M/K}$ in $\Gal_{K(\mu_p)/K} = \Delta$ and $H_0 = \Gal_{K_0/K}$ is cyclic of order $d$. By the order of a character of a group we mean the cardinality of its image.

\begin{lm} There is a character $\chi \colon H_0 \rar \bF_p^{\ast}$ of order $\geq \ord(\bar{\sigma})$ and a tower of Galois extensions
\[ K \subset K_0 \subset K_1 \subset \dots \subset K_i \subset \dots \subset \bigcup\nolimits_{i=0}^{\infty} K_i =: \cL \subseteq K_S^R \] 
\noindent such that for all $i \geq 1$ we have:
\[H_i := \Gal_{K_i/K} \cong H_0 \ltimes (\prod_{j=1}^i \bZ/p\bZ), \]

\noindent where $H_0$ acts diagonally on $\prod_{j=1}^i \bZ/p\bZ$ and the action on each component is given by $\chi$.
\end{lm}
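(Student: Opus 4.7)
The plan has two parts: choosing the character $\chi$ and then constructing the tower $(K_i)_{i\ge 0}$ by induction.

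\emph{Choice of $\chi$.} Since $\Delta\hookrightarrow\bF_p^{\ast}$ has order $d$, we have $d\mid p-1$; combined with $H_0$ being cyclic of order $d$ and $p\nmid d$, the group $\Hom(H_0,\bF_p^{\ast})$ is itself cyclic of order $d$. Take $\chi$ to be a faithful character of $H_0$; then $\ord(\chi)=d\ge\ord(\bar\sigma)$, as required.

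\emph{Inductive step.} Given $K_{i-1}\subseteq K_S^R$ with $\Gal_{K_{i-1}/K}\cong H_{i-1}$ already constructed, producing $K_i$ amounts to finding a $\bZ/p\bZ$-extension $N/K_{i-1}$ inside $K_S^R$, Galois over $K$, on which $\Gal_{K_{i-1}/K_0}$ acts trivially and $H_0$ acts via $\chi$. Via the inflation-restriction sequence and the fact that $\Gal_{K_{i-1}/K_0}\cong(\bF_p(\chi))^{i-1}$ is $\chi$-isotypic as an $H_0$-module, this reduces to producing a class in the relevant isotypic component
\[
V\subseteq\coh^1(\Gal_{K_S^R/K_0},\bF_p)
\]
(for the conjugation action of $H_0$) linearly independent from the $(i-1)$-dimensional subspace of classes coming from $K_{i-1}/K_0$ itself. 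Hence it is enough to prove $\dim_{\bF_p}V=\infty$.

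\emph{Infinite-dimensionality of $V$.} By diagram \eqref{eq:red_to_M_mu_p} of Step 4 we have $K_0\cap K(\mu_p)=K$ and $K_0\cap M=K$, so $\Gal_{F_0/K}\cong H_0\times\Delta$ for $F_0:=K_0(\mu_p)$. Kummer theory on $F_0$ identifies $V$ (after a cyclotomic twist) with an eigenspace of a subquotient of $F_0^{\ast}/F_0^{\ast p}$ under the $H_0\times\Delta$-action. To produce infinitely many independent classes in this eigenspace, one uses Chebotarev for $MK_0/K$: since $K_0\cap M=K$, the set $P_{M/K}(\sigma)\cap\cs(K_0/K)$ has positive Dirichlet density in $K$, is thus infinite, and is contained in $S$. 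Primes of $K_0$ above such primes of $K$ contribute Kummer classes which, when projected to each isotypic component of the $H_0$-action, feed the $\chi$-eigenspace; linear independence of local contributions from distinct primes of $K$ yields $\dim V=\infty$.

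\emph{Main obstacle.} The hard step is precisely the infinite-dimensionality of $V$, because $\delta_{K(\mu_p)}(S)=0$ means the naive strategy of creating tame $\bZ/p\bZ$-extensions at primes of $S$ split in $K(\mu_p)/K$ fails (no such primes exist in $S$). One circumvents this by using the generalized fixed Frobenius densities of Sections \ref{sec:Def_of_gen_densities}--\ref{sec:gen_of_gen_dens_to_char_dens} applied to $x=\sigma$: although $S$ is invisible to Dirichlet density over $K(\mu_p)$, its $\sigma$-density remains positive, which is what makes it possible to read off enough Kummer classes in the required $(H_0,\chi)$-eigencomponent to sustain the induction indefinitely. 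Taking $\cL:=\bigcup_i K_i$ then gives the desired tower inside $K_S^R$.
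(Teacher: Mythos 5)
There is a genuine gap, located exactly at the step you yourself flag as the main obstacle. Your source of infinitely many classes is wrong: you propose to use primes of $K$ in $P_{M/K}(\sigma)\cap\cs(K_0/K)$ and let primes of $K_0$ above them "contribute Kummer classes". But for such a prime $\fp$, complete splitting in $K_0/K$ gives $K_{0,\fP}=K_{\fp}$ for any $\fP \mid \fp$, while $\fp\in P_{M/K}(\sigma)$ with $\bar{\sigma}\neq 1$ forces $K(\mu_p)_{\fp}\neq K_{\fp}$, i.e. $\mu_p\not\subseteq K_{0,\fP}$. A prime with $\N\fP\not\equiv 1 \bmod p$ and $\fP\nmid p$ cannot carry tame $\bZ/p\bZ$-ramification, so these primes contribute nothing to $\coh^1(K_{0,S'}/K_0,\bZ/p\bZ)$; your eigenspace $V$ could a priori be finite and the induction would stall. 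The paper's construction hinges on the opposite choice: one picks $x\in H_0$ with $\ord(x)=\ord(\bar{\sigma})$ and restricts to $S'=S\cap P_{MK_0/K}(\sigma,x)$, so that for $\fp\in S'$ both $K_{0,\fP}/K_{\fp}$ and $K(\mu_p)_{\fp}/K_{\fp}$ are the unramified extension of degree $\ord(\bar{\sigma})$, whence $\mu_p\subseteq K_{0,\fP}$ and (by the counting result \cite{NSW} 10.7.3) $X=\coh^1(K_{0,S'}/K_0,\bZ/p\bZ)$ is infinite. This is the idea your proposal is missing.

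A second, related defect: you fix $\chi$ to be a faithful character of $H_0$ at the outset. Nothing guarantees that the faithful eigencomponent of $X$ is infinite. The paper does not choose $\chi$ in advance; it shows by a pigeonhole argument that every component $X(\phi)$ with $\ord(\phi)<\ord(\bar{\sigma})$ is finite (because $X(\phi)$ injects into $\coh^1((K_0^{\ker\phi})_{S'}/K_0^{\ker\phi},\bZ/p\bZ)$, and over $K_0^{\ker\phi}$ the primes of $S'$ no longer contain $\mu_p$ locally), so \emph{some} $\chi$ of order $\geq\ord(\bar{\sigma})$ — not necessarily the faithful one — has $X(\chi)$ infinite, and that is the $\chi$ one takes. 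Finally, your closing appeal to the generalized $\sigma$-densities is misplaced for this lemma: the tower construction in the paper uses only Chebotarev/Dirichlet density together with \cite{NSW} 10.7.3; the fixed-Frobenius densities enter later, in bounding $\abs{\Sha^1_i(\psi)}$ uniformly in $i$ (Proposition \ref{prop:unif_bound}), not in producing the tower.
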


\begin{proof}
$K_0$ and $H_0$ were constructed in step 5 of Section \ref{sec:red_steps}. We have $M \cap K_S^R = K$ and hence
\[ P_{M/K}(\sigma) = \bigcup_{x \in H_0} P_{MK_0/K}(\sigma,x). \] 

\noindent up to finitely many ramified primes (cf. \cite{Wi} Proposition 2.1). By looking at the Dirichlet density, $S \cap P_{MK_0/K}(\sigma,x)$ is infinite for any $x \in H_0$, hence also $S_{K_0} \cap P_{MK_0/K}(\sigma,x)_{K_0}$ is infinite. Choose such an $x$ with $\ord(x) = \ord(\bar{\sigma})$ and write $S^{\prime} := S \cap P_{MK_0/K}(\sigma,x)$. Then for almost all $\fp \in S_{K_0}^{\prime}$, the local extensions $K_{0,\fp}/K_{\fp}$ and $K(\mu_p)_{\fp}/K_{\fp}$ are unramified of degree $\ord(\bar{\sigma})$, hence $K_0(\mu_p)_{\fp}/K_{0,\fp}$ is completely split in $\fp$, i.e., $\mu_p \subseteq K_{0,\fp}$. In particular, by \cite{NSW} 10.7.3, $X := \coh^1(K_{0, S^{\prime}}/K_0, \bZ/p\bZ)$ is infinite. $X$ is a (semisimple) $\bF_p[H_0]$-module, hence it decomposes into isotypical components $X(\phi)$ where $\phi$ goes through all $\bF_p^{\ast}$-valued characters of $H_0$. From the Hochschild-Serre spectral sequence for the Galois groups of the extensions $K_{0,S^{\prime}}/K_0/K_0^{\ker(\phi)}$ and $(\
abs{\ker(\phi)}, p) = 1$ it follows that
\begin{equation} \label{eqn:Xphi} 
X(\phi) \subseteq \coh^1((K_0^{\ker(\phi)})_{S^{\prime}}/K_0^{\ker(\phi)}, \bZ/p\bZ) \subseteq \coh^1(K_{0,S^{\prime}}/K_0, \bZ/p\bZ)
\end{equation}

\noindent In particular, if $\ord(\phi) < \ord(\bar{\sigma})$, then the order of the image of $x$ in $H_0/\ker(\phi) = \Gal_{K_0^{\ker(\phi)}/K}$ is $< \ord(\bar{\sigma})$ and for all primes $\fp \in S^{\prime}(K_0^{\ker(\phi)})$ one has $\mu_p \not\subseteq (K_0^{\ker(\phi)})_{\fp}$. By \cite{NSW} 10.7.3, the group in the middle of \eqref{eqn:Xphi} is finite and hence there must be a chracter $\chi$ of $H_0$ of order $\geq \ord(\bar{\sigma})$ such that $X(\chi)$ is infinite. For a family $(\alpha_i)_{i=1}^{\infty}$ of linearly independent elements of $X(\chi)$, let $K_0(\alpha_i)$ be the cyclic $\bZ/p\bZ$-extension of $K_0$ corresponding to $\alpha_i$ and define $K_i$ to be the compositum of the fields $\{ K_0(\alpha_j) \}_{j = 0}^i$. 
\end{proof}


\subsection{Action of $\Delta \times H_i$ on $\Sha^1(K_{S \cup T}/K_i, S \sm T, \mu_p)$.} \label{sec:action_of_delta_H_i}

Let $\cL/K_i/K$ be one of the fields defined above. We write 
\[ \Sha^1_i := \Sha^1(K_{S \cup T}/K_i, S \sm T; \mu_p).\] 

\noindent We have the following embeddings:

\centerline{
\begin{xy}
\xymatrix{
\Sha^1_i \ar@{^{(}->}[r] & \coh^1(K_{S \cup T}/K_i, \mu_p) \ar@{^{(}->}[r] \ar@{^{(}->}[d]^{\Delta} & \coh^1(\overline{K}/K_i, \mu_p) \ar@{=}[r] \ar@{^{(}->}[d]^{\Delta} & K_i^{\ast}/p \ar@{^{(}->}[d]^{\Delta} \\
& \coh^1(K_{S \cup T}/K_i(\mu_p), \mu_p) \ar@{^{(}->}[r] & \coh^1(\overline{K}/K_i(\mu_p), \mu_p) \ar@{=}[r] & K_i(\mu_p)^{\ast}/p
}
\end{xy}
} 

\noindent where the $\Delta$ on the arrows means that the upper entry is obtained from the lower one by taking $\Delta$-invariants. The horizontal isomorphisms on the right are canonical and given by Kummer theory. The vertical maps come from the Hochschild-Serre spectral sequence. As a subset of the lower right entry $\Sha^1_i$ defines by Kummer theory a $p$-primary Galois extension of $K_i(\mu_p)$. Further, the subgroup $\Sha^1_i$ is invariant under the $\Delta \times H_i$-action on the lower entries. Indeed, the $H_i$-invariance results simply from the definition of $\Sha^1_i$ and the fact that $S \sm T$ is defined over $K$, and the $\Delta$-invariance is obvious from the diagram. Let $L_i$ denote the abelian $p$-primary extension of $K_i(\mu_p)$, which is associated to $\Sha^1_i \subseteq K_i(\mu_p)^{\ast}/p$ via Kummer theory. The invariance discussed above implies that the composite extension $L_i/K_i(\mu_p)/K$ is Galois. Fix a trivialization of $\mu_p$; this gives an isomorphism of the Galois group of 
$L_i/K_i(\mu_p)$ with $\Sha^{1,\vee}_i := \Hom(\Sha^1_i, \bZ/p\bZ)$ and $\Delta$ acts on it via the embedding $\Delta \har \bF_p^{\ast}$. Here is a diagram of the involved extensions:

\centerline{
\begin{xy}
\xymatrix{
					& L_i \ar@{-}[d]^{\Sha^{1,\vee}_i} \\
K_i \ar@{-}[d]_{H_i} \ar@{-}[r] 	& K_i(\mu_p) \ar@{-}[d] \\
K   \ar@{-}[r]_{\Delta}			& K(\mu_p)
}
\end{xy}
} 

\noindent We have shown the following lemma:

\begin{lm}
The composite extension $L_i/K_i(\mu_p)/K$ is Galois. In particular, we have the extension of Galois groups:
\[1 \rar \Sha^{1,\vee}_i \rar \Gal_{L_i/K} \rar \Delta \times H_i \rar 1. \]
The group $\Delta \times H_i$ acts on $\Sha^{1,\vee}_i$ as follows: $\Delta$ acts by scalars via the canonical embedding $\Delta \har \bF_p^{\ast}$, and the action of $H_i$ on $\Sha^{1,\vee}_i$ is dual to the natural action of $H_i$ on $\Sha^1_i$.
\end{lm}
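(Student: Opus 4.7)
The plan is to read this lemma as a bookkeeping consequence of two inputs already assembled in the preceding discussion: the $(\Delta \times H_i)$-stability of $\Sha^1_i$ inside $K_i(\mu_p)^{\ast}/p$, and the functoriality of the Kummer pairing. In fact only the action computation requires any care.

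First, I would deduce that $L_i/K$ is Galois. The preceding text established that $\Sha^1_i$, viewed inside $K_i(\mu_p)^{\ast}/p$, is a sub-$\bF_p$-vector space stable under the full Galois group $\Delta \times H_i = \Gal_{K_i(\mu_p)/K}$ (the $H_i$-stability from $S \sm T$ being defined over $K$, the $\Delta$-stability from the diagram). By the Galois correspondence for Kummer extensions, this forces $L_i/K$ to be Galois, and the inflation sequence for the tower $L_i/K_i(\mu_p)/K$ yields the desired short exact sequence. Fixing the trivialization of $\mu_p$, the perfect Kummer pairing $\Gal_{L_i/K_i(\mu_p)} \times \Sha^1_i \rar \mu_p \cong \bZ/p\bZ$ identifies $\Gal_{L_i/K_i(\mu_p)}$ with $\Sha^{1,\vee}_i = \Hom(\Sha^1_i, \bZ/p\bZ)$.

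The main (and only mildly delicate) step is pinning down the outer action of $\Delta \times H_i$ on this dual, which I would do using the naturality formula
\[ \langle \tilde g\, \phi\, \tilde g^{-1},\, \alpha \rangle \;=\; g\bigl(\langle \phi,\, g^{-1}\cdot\alpha\rangle\bigr) \;\in\; \mu_p, \]
valid for $g \in \Delta \times H_i$, any lift $\tilde g \in \Gal_{L_i/K}$, $\phi \in \Gal_{L_i/K_i(\mu_p)}$ and $\alpha \in \Sha^1_i$. For $g \in H_i$ the action of $g$ on $\mu_p$ is trivial, so the outer $g$ on the right drops out and one recovers the contragredient of the natural $H_i$-action on $\Sha^1_i$. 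For $g \in \Delta$ two facts conspire: the elements of $\Sha^1_i$ are in fact pointwise fixed by $\Delta$ inside $K_i(\mu_p)^{\ast}/p$ (as they come by restriction from $\coh^1(K_{S \cup T}/K_i, \mu_p)$, hence land in the $\Delta$-invariants by Hochschild--Serre and $(\abs{\Delta}, p) = 1$), so $g^{-1}\cdot\alpha = \alpha$; and $g$ acts on $\mu_p$ by the canonical embedding $\chi_{\rm cyc} \colon \Delta \har \bF_p^{\ast}$. The formula therefore collapses to $(g\cdot\phi)(\alpha) = g(\phi(\alpha)) = \chi_{\rm cyc}(g) \cdot \phi(\alpha)$, giving the scalar $\Delta$-action claimed in the lemma.
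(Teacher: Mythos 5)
Your argument is correct and follows essentially the same route as the paper: the paper also establishes the $(\Delta\times H_i)$-stability of $\Sha^1_i$ inside $K_i(\mu_p)^{\ast}/p$ (with the $\Delta$-invariance coming from the Hochschild--Serre diagram, exactly as you note), deduces that $L_i/K$ is Galois via Kummer theory, and reads off the action after fixing a trivialization of $\mu_p$. Your explicit computation with the naturality of the Kummer pairing just spells out what the paper asserts without detail.
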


Observe that by construction, $L_i/K_i(\mu_p)$ is completely split in $S \sm T$. Now we investigate the action of $H_i$ more precise. For all $i > 0$, choose compatible sections $\lambda_i \colon H_0 \har H_i$ of the projections $H_i \tar H_0$ (they exist as $\abs{H_0} = d$ is prime to $[K_i:K_0] = p^i$). Via  $\lambda_i \colon H_0 \stackrel{\sim}{\rar} \lambda_i(H_0)$ we identify the character group $H_0^{\vee}$ of $H_0$ with that of $\lambda_i(H_0)$. We have a decomposition
\[ \Sha^1_i = \bigoplus_{\psi \in H_0^{\vee}} \Sha^1_i(\psi), \]

\noindent such that $\lambda_i(H_0)$ acts on $\Sha^1_i(\psi)$ by $\psi$. Observe that the subspace $\Sha^i(\psi)$ is again $\Delta \times H_i$-stable, hence the corresponding Kummer subextension $L_i(\psi)/K_i(\mu_p)$ of $L_i/K_i(\mu_p)$ is Galois over $K$. We denote the Galois group of $L_i(\psi)/K_i$ by $\Sha^1_i(\psi)^{\vee}$. We have $\Sha^1_i(\psi)^{\vee} = \Hom(\Sha^1_i(\psi),\mu_p)$ and $\lambda_i(H_0)$ acts on it by $\psi^{-1}$.




\subsection{Reduction to uniform boundedness} \label{sec:red_to_unif_boundedness}

We reduce equation \eqref{eq:dirlimvan} for the tower $\cL/K$ defined in Section \ref{constr_of_tower} which we have to show, to the following two propositions (which we will prove in Subsections \ref{sec:deal_with_chars_of_ord} and \ref{sec:unif_bouns_via_infdens}), both of them bounding $\Sha^1_i(\psi)$ in two different cases:

\begin{prop}\label{prop:cont_in_H_0_for_K_0}
Let $i \geq 1$ and let $\psi \in H_0^{\vee}$ be of order $< \ord(\bar{\sigma})$. Then 
\[ \Sha_i^1(\psi) \subseteq \coh^1(K_{S \cup T}/K_0, \mu_p)\] 
\noindent (both regarded as subgroups of $\coh^1(K_{S \cup T}/K_i,\mu_p)$).  
\end{prop}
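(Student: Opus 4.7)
The plan is to reduce the inclusion to the statement that $N_i := \Gal(K_i/K_0) \cong (\bZ/p\bZ)^i$ acts trivially on $\Sha^1_i(\psi)$, and then combine the semidirect-product structure $H_i = \lambda_i(H_0) \ltimes N_i$ with the $\Delta \times H_i$-stability of $\Sha^1_i(\psi)$ asserted in Section~\ref{sec:action_of_delta_H_i} to deduce this triviality.

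\emph{Reduction via Hochschild--Serre.} Since $K_i \subseteq K_S^R(\fc_{\leq \ell})$ and $\mu_p \not\subseteq K_S^R(\fc_{\leq \ell})$ by Step~3 of Section~\ref{sec:red_steps}, the $G_{K_{i, S \cup T}}$-invariants of $\mu_p$ vanish, so $\coh^j(N_i, \mu_p^{G_{K_{i, S \cup T}}}) = 0$ for all $j \geq 1$. The inflation--restriction sequence then produces an isomorphism
\[
\coh^1(K_{S \cup T}/K_0, \mu_p) \stackrel{\sim}{\longrightarrow} \coh^1(K_{S \cup T}/K_i, \mu_p)^{N_i},
\]
whose restriction is the injection identified in Section~\ref{sec:red_steps}. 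Hence the proposition is equivalent to the assertion that $N_i$ acts trivially on $\Sha^1_i(\psi)$.

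\emph{Main computation.} Let $\alpha \in \Sha^1_i(\psi)$, $n \in N_i$, and $e \in \lambda_i(H_0)$, and denote by $\rho$ the $H_i$-action. The relation $ene^{-1} = \chi(e) \cdot n$ in $N_i$ (written additively) together with $\rho(e)\alpha = \psi(e)\alpha$ give
\[
\rho(e)\rho(n)\alpha = \rho(\chi(e) \cdot n)\rho(e)\alpha = \psi(e)\,\rho(n)^{\chi(e)}\alpha.
\]
Writing $\rho(n) = 1 + \delta_n$ with $\delta_n$ nilpotent (as $N_i$ is a $p$-group and the coefficients are in characteristic $p$), the $H_i$-stability of $\Sha^1_i(\psi)$ places every $\rho(n)^m \alpha$ inside $\Sha^1_i(\psi)$, and a Vandermonde-type inversion over $m \in \{0, 1, \ldots, p-1\}$ then shows $\delta_n^k \alpha \in \Sha^1_i(\psi)$ for each $k$. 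At the top $k_0$ of the $\delta_n$-chain on $\alpha$, the vanishing $\delta_n^{k_0+1}\alpha = 0$ substituted into the expansion of $(1+\delta_n)^{\chi(e)}$ forces $\delta_n^{k_0}\alpha$ to be a $\rho(e)$-eigenvector with eigenvalue $\psi(e)\chi(e)^{k_0}$, so $\delta_n^{k_0}\alpha \in \Sha^1_i(\psi \chi^{k_0})$. Maschke's theorem, applicable since $|H_0| = d$ is coprime to $p$, then yields $\delta_n^{k_0}\alpha = 0$ unless $\psi = \psi \chi^{k_0}$, i.e., unless $\ord(\chi) \mid k_0$.

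\emph{Main obstacle.} Ruling out $k_0 \geq \ord(\chi)$ is precisely where the hypothesis $\ord(\psi) < \ord(\bar{\sigma}) \leq \ord(\chi)$ enters. By \cite{NSW} 10.7.3 the $\phi$-components $X(\phi) \subseteq \coh^1(K_{0, S'}/K_0, \bZ/p\bZ)$ are finite for $\phi$ of order $< \ord(\bar{\sigma})$ (this was exactly the finiteness used in Section~\ref{constr_of_tower} to force the choice of $\chi$); propagating this bounded size through the tower $K_i/K_0$ rules out $\delta_n$-chains of length $\geq \ord(\chi)$ inside $\Sha^1_i(\psi)$. If the structural argument needs additional input, one invokes Corollary~\ref{cor:counting_inf_dens_of_Chebotarevs} for the extension $L_i(\psi)/K$ and the almost Chebotarev set $S \backsimeq P_{M/K}(\sigma)$: a long chain would force a positive $\bar{\sigma}$-density contribution from Frobenii incompatible with the small $\psi$-component. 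This forces $k_0 = 0$, so $\delta_n \alpha = 0$ for every $n \in N_i$, completing the argument.
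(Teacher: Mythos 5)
Your Hochschild--Serre reduction is correct and is exactly the identification the paper uses: since $\mu_p(K_i)=\{1\}$, inflation gives $\coh^1(K_{S\cup T}/K_0,\mu_p)\stackrel{\sim}{\rar}\coh^1(K_{S\cup T}/K_i,\mu_p)^{N_i}$, so it suffices to show $N_i=\Gal_{K_i/K_0}$ acts trivially on $\Sha^1_i(\psi)$. The eigenvalue computation is also sound as far as it goes: writing $\rho(n)=1+\delta_n$, the top nonzero term $\delta_n^{k_0}\alpha$ lies in $\Sha^1_i(\psi)\cap\Sha^1_i(\psi\chi^{k_0})$, which forces $\ord(\chi)\mid k_0$. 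But that is where the proof stops being a proof: since $\delta_n^p=0$ and $\ord(\chi)\mid p-1$, chain lengths $k_0\in\{\ord(\chi),2\ord(\chi),\dots\}\cap\{1,\dots,p-1\}$ are not excluded by eigenvalue bookkeeping, and your two proposed patches do not close this. ``Propagating the bounded size of $X(\phi)$ through the tower'' is not an argument --- that finiteness (from \cite{NSW} 10.7.3) was used only to select the character $\chi$ in the construction of the tower and says nothing about $\Sha^1_i(\psi)$; and the density bound via Corollary \ref{cor:counting_inf_dens_of_Chebotarevs} is the mechanism of Proposition \ref{prop:unif_bound}, i.e.\ of the \emph{complementary} case $\ord(\psi)\geq\ord(\bar{\sigma})$, and it only yields uniform finiteness, not triviality of an $N_i$-action.

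The missing idea is that the hypothesis $\ord(\psi)<\ord(\bar{\sigma})\leq\ord(\chi)$ enters purely group-theoretically: since $H_0$ is cyclic, it gives $\ker(\chi)\subsetneq\ker(\psi)$, hence an element $h\in\ker(\psi)\sm\ker(\chi)$. Then $\lambda_i(h)$ acts \emph{trivially} on $\Sha^1_i(\psi)$ (as $\psi(h)=1$), while conjugation by $\lambda_i(h)$ on $N_i\cong\bF_p^i$ is multiplication by $\chi(h)\neq 1$; the commutator identity $(h,v)^{-1}(1,w)(h,v)(1,-w)=(1,(\chi(h)^{-1}-1)w)$ shows that the normal closure of $\lambda_i(\ker(\psi))$ in $H_i$ contains all of $N_i$. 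Since the kernel of $H_i\rar\GL(\Sha^1_i(\psi))$ is normal (using the $H_i$-stability of $\Sha^1_i(\psi)$) and contains $\lambda_i(\ker(\psi))$, it contains $N_i$, which is the paper's Lemma \ref{lm:normteileristgross}. Your framework can in fact be repaired within itself by specializing your element $e$ to $\lambda_i(h)$: then $\rho(n)\alpha=\rho(n)^{\chi(h)}\alpha$ for all $\alpha\in\Sha^1_i(\psi)$, and since $\gcd(\chi(h)-1,p)=1$ and $\rho(n)^p=1$ this forces $\rho(n)\alpha=\alpha$ directly, with no chain analysis at all. As written, however, the proof has a genuine gap at its decisive step.
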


\begin{prop}\label{prop:unif_bound} There is a constant $C > 0$ depending only on $M/K,p, \sigma$ (but not on $i$) such that for all $\psi \in H_0^{\vee}$ of order $\geq \ord(\bar{\sigma})$ one has
\[\abs{\Sha^1_i(\psi)} < C \]

\noindent for each $i \geq 1$.
\end{prop}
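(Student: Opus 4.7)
The natural strategy is to translate the hypothesis $P_{M/K}(\sigma) \subsetsim S$ into a positive $\bar{\sigma}$-density estimate via Corollary~\ref{cor:counting_inf_dens_of_Chebotarevs}, propagate it up the tower $L_i(\psi)/K_i(\mu_p)/K$ by the base-change formula of Corollary~\ref{cor:bc_of_delta_x}, and read off the bound on $|V| := |\Sha^1_i(\psi)^\vee|$ from the fact that densities lie in $[0,1]$.

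First I would pin down the relevant group element. Since $\Delta \hookrightarrow \bF_p^\ast$ is cyclic and the image of $\psi\colon H_0 \to \bF_p^\ast$ is a cyclic subgroup of order $\geq \ord(\bar{\sigma})$, it contains $\langle\bar{\sigma}\rangle$. Choose $h_0 \in H_0$ with $\psi(h_0) = \bar{\sigma}$ and set $\bar{y} := (\lambda_i(h_0), \bar{\sigma}) \in H_i \times \Delta$, $n := \ord(\bar{y}) = \ord(h_0)$. Then $(n,p) = 1$ since $n$ divides $d < p$, and $\bar{y}$ acts trivially on $V$: the $\Delta$-component acts by the scalar $\bar{\sigma}$, the $\lambda_i(H_0)$-component by $\psi^{-1}$, and $\bar{\sigma} \cdot \psi(h_0)^{-1} = 1$. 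Now fix any lift $g_0 \in G := \Gal_{L_i(\psi)/K}$ of $\bar{y}$; since $g_0$ centralizes $V$, the identity $(g_0 v)^n = g_0^n + n v$ holds in $V$ for every $v \in V$ (written additively). Using $(n,p) = 1$, setting $y := g_0 \cdot (-n^{-1} g_0^n)$ yields the \emph{unique} lift of $\bar{y}$ in $G$ with $y^n = 1$; a direct check shows $y$ is independent of the choice of $g_0$, and its $V$-conjugacy class in $\pi^{-1}(\bar{y})$ is the singleton $\{y\}$.

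The defining property of $\Sha^1_i(\psi)$ now enters as follows: for every unramified $\fp^{(1)} \in S_{K_i(\mu_p)} \cap P^{\bar{y}}_{K_i(\mu_p)/K}$, complete splitting of $L_i(\psi)/K_i(\mu_p)$ at $\fp^{(1)}$ forces any Frobenius lift $\tilde{y} \in G$ attached to $\fp^{(1)}$ to satisfy $\tilde{y}^n = 1$, whence $\tilde{y} = y$ by the uniqueness above. Together with Lemma~\ref{lm:H-conj_rel} this yields, up to finitely many ramified exceptions,
\[ S_{K_i(\mu_p)} \cap M_{\{y\}} = S_{K_i(\mu_p)} \cap P^{\bar{y}}_{K_i(\mu_p)/K}. \]
Using that $K_i(\mu_p) \cap M = K(\mu_p)$ and $\Gal_{K_i M / K} \cong H_i \times \Gal_{M/K}$ (from diagram~\eqref{eq:red_to_M_mu_p}), Corollary~\ref{cor:counting_inf_dens_of_Chebotarevs} applied with $L = K_i(\mu_p)$ and the given $M$ evaluates the right-hand side as
\[ \beta := \delta_{K_i(\mu_p)/K,\bar{y}}(S_{K_i(\mu_p)}) = \frac{|C(\sigma, \Gal_{M/K})|}{[M:K(\mu_p)]}, \]
the factor $|C_{H_i}(\lambda_i(h_0))|$ canceling between numerator and denominator; this $\beta > 0$ depends only on $M/K$ and $\sigma$, \emph{independent of $i$ and $\psi$}.

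Finally, the base-change formula of Corollary~\ref{cor:bc_of_delta_x} applied to the tower $L_i(\psi)/K_i(\mu_p)/K$ with $x = \bar{y}$, lift $y \in G$, and $V$-conjugacy class $\{y\}$ of cardinality $1$ combines with the preceding identity to give
\[ \delta_{L_i(\psi)/K, y}(S_{L_i(\psi)}) = |V| \cdot \beta \leq 1, \]
forcing $|V| \leq 1/\beta = [M:K(\mu_p)] / |C(\sigma, \Gal_{M/K})|$, which is the desired uniform bound $C$ depending only on $M/K$, $p$ and $\sigma$. The main obstacle I anticipate is the careful verification that complete splitting really pins down the Frobenius lift to $y$ as opposed to some other lift of $\bar{y}$: this hinges both on the triviality of the $\bar{y}$-action on $V$ (so that among the $|V|$ candidate lifts exactly one has $n$-th power equal to the identity) and on $(n,p) = 1$ (so that the equation $n v = -g_0^n$ in $V$ has a unique solution).
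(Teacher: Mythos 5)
Your proposal is correct and follows essentially the same route as the paper: pick $h_0\in H_0$ with $\psi(h_0)=\bar{\sigma}$ so that $(\bar{\sigma},\lambda_i(h_0))$ acts trivially on $\Sha^1_i(\psi)^{\vee}$, isolate the unique prime-to-$p$-order lift (the paper gets this from the splitting of the coprime-order extension $G_y\cong\Sha^1_i(\psi)^{\vee}\times\langle y\rangle$ rather than your explicit $y=g_0\cdot(-n^{-1}g_0^n)$ computation, but these are the same observation), use complete splitting to trap $S$ inside $M_{\{y\}}$, and combine Corollary \ref{cor:counting_inf_dens_of_Chebotarevs} with Corollary \ref{cor:bc_of_delta_x} to force $|\Sha^1_i(\psi)|\cdot\beta\leq 1$. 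The only point to tidy is that Corollary \ref{cor:counting_inf_dens_of_Chebotarevs} applies to sets $\backsimeq P_{M/K}(\sigma)$, so one should work with $S_0:=S\cap P_{M/K}(\sigma)$ (as the paper does) rather than with $S$ itself, which is only assumed to contain $P_{M/K}(\sigma)$ up to density zero.
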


Indeed, to deduce equation \eqref{eq:dirlimvan}, it is enough to show that for $j \gg i \gg 0$, the map 
\[ \cores_{ji} \colon \Sha^1_j \rar \Sha^1_i \]
is the zero map (we denote by $\cores_{ji}$ resp. $\res_{ij}$ the corestriction resp. the restriction maps between the levels $K_i$ and $K_j$ for $i \leq j$). By compatibility of the chosen sections $\lambda_i \colon H_0 \har H_i$, we have $\res_{ij}(\Sha_i^1(\psi)) \subseteq \Sha_j^1(\psi)$ for $i \leq j$. Since the restriction maps are injective, we can choose by Proposition \ref{prop:unif_bound} an $i_0 \geq 0$ such that the inclusion 
\[\res_{ij} \colon \Sha_i^1(\psi) \har \Sha_j^1(\psi) \]

\noindent is an isomorphism for all $j \geq i \geq i_0$ and all $\psi \in H_0^{\vee}$ of order $\geq \ord(\bar{\sigma})$. Then for $j > i \geq i_0$ we have:

\[ \Sha_j^1 = \bigoplus_{\substack{\psi \in H_0^{\vee} \\ \ord(\psi) \geq \ord(\bar{\sigma})}} \Sha_j^1(\psi) \oplus \bigoplus_{\substack{\psi \in H_0^{\vee} \\ \ord(\psi) < \ord(\bar{\sigma})}} \Sha_j^1(\psi), \]

\noindent where the first summand is contained in $\res_{i_0 j}(\Sha_{i_0}^1)$ and the second summand is contained in $\coh^1(K_{S \cup T}/K_0, \mu_p)$ by Proposition \ref{prop:cont_in_H_0_for_K_0}. Thus we conclude that 
\[\Sha_j^1 \subseteq \res_{ij}(\coh^1(K_{S \cup T}/K_i, \mu_p)).\]

\noindent where both groups are seen as subgroups of $\coh^1(K_{S \cup T}/K_j,\mu_p)$. Finally, recall that for $L^{\prime}/L$ Galois, the composition $\cores_L^{L^{\prime}} \circ \res^L_{L^{\prime}}$ is equal to the multiplication by the degree $[L^{\prime}:L]$, that further $p$ divides $[K_j:K_i]$ for $j > i \geq 0$ and that the group $\coh^1(K_{S \cup T}/K_i, \mu_p)$ is killed by $p$. So, for all $i,j$ with $j > i \geq i_0$ and for any $a \in \Sha_j^1$ with preimage $b \in \coh^1(K_{S \cup T}/K_i, \mu_p)$ we have 
\[ \cores_{ji}(a) = \cores_{ji}\res_{ij}(b) = 0, \]

\noindent i.e., $\cores_{ji} \colon \Sha^1_j \rar \Sha^1_i$ is the zero map. Hence also $\cores_{ji}^{\vee}$ is the zero map, which shows equation \eqref{eq:dirlimvan}.


\subsection{Dealing with characters of small order}\label{sec:deal_with_chars_of_ord}

Here is the proof of Proposition \ref{prop:cont_in_H_0_for_K_0}: let $K_0^{\psi} := (K_0)^{\ker(\psi)}$, which is a proper subfield of $K_0$. Let $H_i^{\psi} := \Gal_{K_i/K_0^{\psi}} = \pi_i^{-1}(\ker(\psi))$, where $\pi_i$ denotes the projection $H_i \tar H_0$. Further, $\lambda_i(\ker(\psi))$ acts trivially on $\Sha^1_i(\psi)$. With $\lambda_i(\ker(\psi))$ also the normal subgroup $\langle \langle \lambda_i(\ker(\psi)) \rangle \rangle$ generated by it in $H_i$ acts trivially on $\Sha^1_i(\psi)$. By Lemma \ref{lm:normteileristgross}, $\langle \langle \lambda_i(\ker(\psi)) \rangle \rangle = H_i^{\psi}$. Hence:

\[ \Sha^1_i(\psi) \subseteq \coh^1(K_{S \cup T}/K_i, \mu_p)^{H_i^{\psi}} = \coh^1(K_{S \cup T}/K_0^{\psi}, \mu_p), \]

\noindent where the last equality (inside $\coh^1(K_{S \cup T}/K_i, \mu_p)$) results from the Hochschild-Serre spectral sequence and the fact that $\mu_p(K_i) = \{ 1 \}$. Finally, Proposition \ref{prop:cont_in_H_0_for_K_0} follows as $\coh^1(K_{S \cup T}/K_0^{\psi}, \mu_p) \subseteq \coh^1(K_{S \cup T}/K_0, \mu_p)$ via restriction.

\begin{lm}\label{lm:normteileristgross}
We have $\langle \langle \lambda_i(\ker(\psi)) \rangle \rangle = H_i^{\psi}$.
\end{lm}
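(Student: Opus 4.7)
The plan is to prove the two inclusions separately.

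For $\langle\langle \lambda_i(\ker(\psi)) \rangle\rangle \subseteq H_i^{\psi}$: since $H_0$ is cyclic of order $d$, every subgroup is normal, so $\ker(\psi) \triangleleft H_0$. Thus $H_i^{\psi} = \pi_i^{-1}(\ker(\psi))$ is normal in $H_i$, and it obviously contains $\lambda_i(\ker(\psi))$, which proves this inclusion.

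For the reverse inclusion, I write $A := \prod_{j=1}^i \bZ/p\bZ$ for the abelian normal subgroup of $H_i$; note that $H_i^{\psi} = \lambda_i(\ker(\psi)) \cdot A$ as a set. Hence it suffices to show $A \subseteq \langle\langle \lambda_i(\ker(\psi)) \rangle\rangle$. By definition of the semidirect product, for $s \in \ker(\psi)$ and $v \in A$ the commutator is
\[ [\lambda_i(s), v] = \lambda_i(s) \, v \, \lambda_i(s)^{-1} \, v^{-1} = (\chi(s) - 1) v, \]
so if we can find $s \in \ker(\psi)$ with $\chi(s) \neq 1$, then $\chi(s) - 1 \in \bF_p^{\ast}$ and the commutators generate all of $A$.

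The existence of such an $s$ is the key point and it is where the hypothesis on orders enters: since $H_0$ is cyclic, its subgroups are totally ordered by inclusion, and for a character $\phi$ of $H_0$ we have $|\ker(\phi)| = d/\ord(\phi)$. From $\ord(\psi) < \ord(\bar{\sigma}) \leq \ord(\chi)$ we get $|\ker(\psi)| > |\ker(\chi)|$, so necessarily $\ker(\chi) \subsetneq \ker(\psi)$. Hence any $s \in \ker(\psi) \setminus \ker(\chi)$ does the job.

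The argument is essentially a routine semidirect product commutator computation; the only substantive ingredient is the numerical comparison of the orders, which is precisely the hypothesis of Proposition~\ref{prop:cont_in_H_0_for_K_0}. The main (minor) subtlety is to notice that cyclicity of $H_0$ is what makes the inequality of sizes imply an inclusion of kernels — without it one could imagine $\ker(\psi)$ and $\ker(\chi)$ being incomparable and the argument would fail.
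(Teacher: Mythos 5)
Your proof follows essentially the same route as the paper: identify $H_i$ as the semidirect product, pick $s\in\ker(\psi)\sm\ker(\chi)$, and observe that the commutators $[\lambda_i(s),(1,w)]=(1,(\chi(s)-1)w)$ sweep out all of $A=\prod_{j=1}^i\bZ/p\bZ$ because $\chi(s)-1\in\bF_p^{\ast}$. One auxiliary claim you make is false, however: the subgroups of a cyclic group are \emph{not} totally ordered by inclusion unless its order is a prime power (e.g.\ $\bZ/6$ has incomparable subgroups of orders $2$ and $3$, and here $d$ is merely a divisor of $p-1$). Consequently your intermediate assertion $\ker(\chi)\subsetneq\ker(\psi)$ need not hold (the paper makes the same overstatement). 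This does no damage: all the argument requires is $\ker(\psi)\not\subseteq\ker(\chi)$, i.e.\ the existence of some $s\in\ker(\psi)\sm\ker(\chi)$, and that follows immediately from $\abs{\ker(\psi)}=d/\ord(\psi)>d/\ord(\chi)=\abs{\ker(\chi)}$ without any appeal to cyclicity. Your closing remark that the argument would fail if the two kernels were incomparable is therefore also off the mark.
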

\begin{proof}
We can represent $H_i$ as the follows (recall that $\chi \colon H_0 \rar \bF_p^{\ast}$ is the character defining the action of $H_0$ on $\ker(H_i \tar H_0)$; it has order $\geq \ord(\bar{\sigma})$):
\[H_i \cong \left\{ (a,v) \colon a \in H_0, v \in \bF_p^i \right\}, \quad (a,v).(b,w) = (ab,v + \chi(a)w). \]

\noindent As $\ord(\chi) \geq \ord(\bar{\sigma}) > \ord(\psi)$, we have $\ker(\psi) \supsetneq \ker(\chi)$. Let $h \in \ker(\psi) \sm \ker(\chi)$. Write $\lambda_i(h) = (h,v)$. Then for any $w \in \bF_p^i$, the commutator
\[ (h,v)^{-1}.(1,w).(h,v).(1,-w)  = (1,\chi(h)^{-1}w - w) \]
\noindent lies in $\langle \langle \lambda_i(\ker(\psi)) \rangle \rangle$. As $1 \neq \chi(h) \in \bF_p^{\ast}$, we easily see that $\langle \langle \lambda_i(\ker(\psi)) \rangle \rangle = H_i^{\psi}$. 
\end{proof}


\subsection{Uniform bounds and generalized densities} \label{sec:unif_bouns_via_infdens}

It remains to prove Proposition \ref{prop:unif_bound}. We use the fixed Frobenius densities introduced in preceding sections. All densities are taken over $K$, so we omit $K$ from the notation and write $\delta_{L,x}$ instead of $\delta_{L/K,x}$ if $L/K$ is finite Galois and $x \in \Gal_{L/K}$. Let $S_0 := P_{M/K}(\sigma) \cap S$. Then $S_0 \backsimeq P_{M/K}(\sigma)$. For any $i > 0$ and any $x \in H_i$, we consider the element $(\bar{\sigma},x) \in \Delta \times H_i = \Gal_{K_i(\mu_p)/K}$. We apply Corollary \ref{cor:counting_inf_dens_of_Chebotarevs} to $\sigma \in \Gal_{M/K}$ and $(\bar{\sigma},x) \in \Delta \times H_i$: $\sigma$ and $(\bar{\sigma},x)$ lie over the same element $\bar{\sigma} \in \Delta$ and $M \cap K_i(\mu_p) = K(\mu_p)$. Hence

\begin{eqnarray} 
\nonumber \delta_{K_i(\mu_p), (\bar{\sigma},x)}(S_0) &=& \frac{ \abs{C((\sigma,x), \Gal_{MK_i/K})} }{[M:K(\mu_p)] \abs{ C((\bar{\sigma},x), \Delta \times H_i) } } \\ 
 &=&  \frac{ \abs{C(\sigma, \Gal_{M/K})} \abs{C(x, H_i)} }{ [M:K(\mu_p)] \abs{ C(\bar{\sigma}, \Delta) } \abs{ C(x, H_i) } } \\ 
\nonumber &=&  \frac{ \abs{C(\sigma, \Gal_{M/K})} }{ [M:K(\mu_p)] \abs{ C(\bar{\sigma}, \Delta) } } 
\end{eqnarray}

\noindent (this computation uses that all involved Galois groups which are a priori fibered products, decompose into simple direct products). Thus we see that for any $x$, the $(\bar{\sigma},x)$-density of $S_0$ in $K_i(\mu_p)$ remains constant $> 0$ and independent of $i$ and of $x$. Let $C > 0$ be some fixed constant such that 

\begin{equation}\label{eq:densbound_from_below} 
\delta_{K_i(\mu_p), (\bar{\sigma},x)}(S_0) > C^{-1}. 
\end{equation}

Let now $x \in H_0$ be an element such that $\psi(x) = \bar{\sigma} \in \Delta \subseteq \bF_p^{\ast}$. This choice is possible since $\ord(\psi) \geq \ord(\bar{\sigma})$ and hence $\langle \bar{\sigma} \rangle \subseteq \psi(H_0) \subseteq \Delta \subseteq \bF_p^{\ast}$ (being cyclic, $\bF_p^{\ast}$ has at most one subgroup of each order). Thus the element $y := (\bar{\sigma},\lambda_i(x)) \in \Delta \times H_i$ operates on $\Sha^1_i(\psi)^{\vee}$ trivially. Consider the Galois extensions:

\centerline{
\begin{xy}
\xymatrix{
L_i(\psi) \ar@{-}[d]^{\Sha^1_i(\psi)^{\vee}} \\
K_i(\mu_p) \ar@{-}[d]^{\Delta \times H_i} \\
K	
}
\end{xy}
} 

\noindent We have the following commutative diagram with exact rows:

\centerline{
\begin{xy}
\xymatrix{ 
1 \ar[r] & \Sha^1_i(\psi)^{\vee} \ar[r] & \Gal_{L_i(\psi)/K} \ar[r]^{\pi} & \Delta \times H_i \ar[r] & 1 \\
1 \ar[r] & \Sha^1_i(\psi)^{\vee} \ar[r] \ar@{=}[u] & G_y \ar[r] \ar@{^{(}->}[u] & \langle y \rangle \ar[r] \ar@{^{(}->}[u] & 1, \\
}
\end{xy}
} 

\noindent where $G_y$ is defined to be the pull-back of $\langle y \rangle$ and $\Gal_{L_i(\psi)/K}$ over $\Delta \times H_i$. Now $\ord(\bar{\sigma})|\ord(x) = \ord(\lambda_i(x))$. Hence $\ord(y) = \lcm(\ord(\bar{\sigma}),\ord(x)) = \ord(x)$ is coprime to $p$. The group $\Sha^1_i(\psi)^{\vee}$ is abelian $p$-primary, hence the lower sequence in the above diagram splits. Since by construction the action of $y$ on $\Sha^1_i(\psi)^{\vee}$ is trivial, we have: $G_y \cong \Sha^1_i(\psi)^{\vee} \times \langle y \rangle$. This shows explicitely that there is precisely one element $\tilde{y}$ in the preimage of $y$ in $G_y$ (resp. in $\Gal_{L_i(\psi)/K}$, which is the same) such that $\ord(\tilde{y}) = \ord(y)$.

As in Section \ref{sec:pull-back-properties}, for $z \in \pi^{-1}(y)$, let $M_z$ be the image of $P_{L_i(\psi)/K}^z$ in $P_{K_i(\mu_p)/K}^y$ under the natural projection map. In particular, Proposition \ref{prop:verallgCheb} gives

\begin{equation}\label{eq:compofdens_of_M_y_tilde}
\delta_{K_i(\mu_p),y}(M_{\tilde{y}}) =  \frac{1}{\abs{\Sha^1_i(\psi)^{\vee}}} = \frac{1}{\abs{\Sha^1_i(\psi)}}
\end{equation}

\noindent as by the above order computations, the $\Sha^1_i(\psi)^{\vee}$-conjugacy class of $\tilde{y}$ in $\pi^{-1}(y)$ contains the only element $\tilde{y}$. The fundamental observation is now the following lemma.

\begin{lm}\label{lm:_inf_dens_of_our_cs_set}
We have $P_{K_i(\mu_p)/K}^y \cap \cs(L_i(\psi)/K_i(\mu_p)) \subseteq M_{\tilde{y}}$.
\end{lm}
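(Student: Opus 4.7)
My plan is to produce, for an arbitrary prime $\fp \in P^y_{K_i(\mu_p)/K} \cap \cs(L_i(\psi)/K_i(\mu_p))$, a prime $\fP$ of $L_i(\psi)$ above $\fp$ whose Frobenius in $\Gal_{L_i(\psi)/K}$ is exactly $\tilde y$; this forces $\fp \in M_{\tilde y}$. First I would pick any prime $\fP$ of $L_i(\psi)$ lying over $\fp$ and focus on its decomposition group $D_\fP \subseteq \Gal_{L_i(\psi)/K}$. Since $\fp$ is unramified in $K_i(\mu_p)/K$ (by $\fp \in P^y$) and completely split (in particular unramified) in $L_i(\psi)/K_i(\mu_p)$, the prime $\fP$ is unramified over its restriction to $K$, so $D_\fP$ is cyclic, generated by $\Frob_{\fP,L_i(\psi)/K}$.

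The heart of the argument will be to show that $\pi$ induces an isomorphism $D_\fP \xrightarrow{\sim} \langle y \rangle$, so that in particular $D_\fP \subseteq G_y$. The image of $D_\fP$ under $\pi$ is the decomposition group of $\fp$ in $\Gal_{K_i(\mu_p)/K} = \Delta \times H_i$, which equals $\langle y \rangle$ since $\Frob_{\fp,K_i(\mu_p)/K} = y$. The kernel $D_\fP \cap \ker(\pi) = D_\fP \cap \Sha^1_i(\psi)^\vee$ is precisely the decomposition group of $\fP$ over $\fp$ in the subextension $L_i(\psi)/K_i(\mu_p)$, and the assumption $\fp \in \cs(L_i(\psi)/K_i(\mu_p))$ makes it trivial. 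Therefore $\pi|_{D_\fP}$ is indeed an isomorphism onto $\langle y \rangle$.

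Now $\Frob_{\fP,L_i(\psi)/K}$ is an element of $G_y$ that projects to $y$ under $\pi$ and has order $\ord(y)$ (equal to the order of its image under the isomorphism $D_\fP \xrightarrow{\sim} \langle y \rangle$). The uniqueness statement established just before the lemma — that inside $G_y \cong \Sha^1_i(\psi)^\vee \times \langle y \rangle$ the element $\tilde y$ is the only preimage of $y$ of order $\ord(y)$ — then forces $\Frob_{\fP,L_i(\psi)/K} = \tilde y$. Hence $\fP \in P^{\tilde y}_{L_i(\psi)/K}$, and restricting $\fP$ to $K_i(\mu_p)$ recovers $\fp$, giving $\fp \in M_{\tilde y}$, which concludes the proof.

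I do not expect a real obstacle here: the lemma is essentially a bookkeeping consequence of the group-theoretic splitting $G_y \cong \Sha^1_i(\psi)^\vee \times \langle y \rangle$ together with standard compatibility of decomposition groups under $\pi$. The only step that needs mild care is the identification of $D_\fP \cap \Sha^1_i(\psi)^\vee$ with the decomposition group of $\fP$ in $L_i(\psi)/K_i(\mu_p)$, which is where the hypothesis of complete splitting is used; after that the uniqueness of $\tilde y$ does all the remaining work.
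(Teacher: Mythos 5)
Your proof is correct and follows essentially the same route as the paper: both arguments reduce to the observation that complete splitting in $L_i(\psi)/K_i(\mu_p)$ forces the Frobenius of a lift of $\fp$ to have order $\ord(y)$, and then invoke the uniqueness of $\tilde y$ as the preimage of $y$ of that order in $G_y \cong \Sha^{1,\vee}_i(\psi) \times \langle y\rangle$. The paper phrases this via the sets $M_z$ and conjugacy classes rather than via the decomposition group $D_{\fP}$, but the content is identical.
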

\begin{proof}
Let $\fp \in P_{K_i(\mu_p)/K}^y \cap \cs(L_i(\psi)/K_i(\mu_p))$. Then $\fp$ is unramified in $L_i(\psi)/K_i(\mu_p)$ and hence lies in one of the sets $M_z$ for some $z \in \pi^{-1}(y)$. Thus the Frobenius of a lift of $\fp$ to $L_i(\psi)$ is $\Sha^1_i(\psi)^{\vee}$-conjugate to $z$ inside $\pi^{-1}(y)$. But since $\fp$ is completely split in $L_i(\psi)$, we must have $\ord(z) = \ord(y)$, and this can only be satisfied for $z = \tilde{y}$. 
\end{proof}

Finally, $S_0 \sm T \subseteq \cs(L_i(\psi)/K_i(\mu_p))$ by construction and Lemma \ref{lm:_inf_dens_of_our_cs_set} implies 
\[(S_0 \sm T) \cap P_{K_i(\mu_p)/K}^y = (S_0 \sm T) \cap M_{\tilde{y}}. \] 
Together with \eqref{eq:compofdens_of_M_y_tilde} and Corollary \ref{cor:bc_of_delta_x}, this gives (since $T$ is finite, we can ignore it in density computations):

\begin{eqnarray*}
1 &\geq& \delta_{L_i(\psi), \tilde{y}} (S_0) \\ 
&=& \delta_{K_i(\mu_p), y} (M_{\tilde{y}})^{-1} \delta_{K_i(\mu_p),y}(S_0 \cap M_{\tilde{y}}) \\
&=& \abs{\Sha^1_i(\psi)} \delta_{K_i(\mu_p),y}(S_0 \cap P_{K_i(\mu_p)/K}^y) \\
&=& \abs{\Sha^1_i(\psi)} \delta_{K_i(\mu_p),y}(S_0).
\end{eqnarray*}

\noindent Hence by \eqref{eq:densbound_from_below}:
\[ \abs{\Sha^1_i(\psi)} \leq \delta_{K_i(\mu_p),y}(S_0)^{-1} < C. \]

\noindent This finishes the proof of Proposition \ref{prop:unif_bound} and hence of Theorem \ref{thm:real_of_loc_ext}.


\renewcommand{\refname}{References}





\end{document}